\documentclass[a4paper]{article}
\usepackage[all]{xy}
\usepackage{graphics,graphicx}
\usepackage{amsfonts,amssymb,amsmath,color,mathrsfs,amstext,bm}
\usepackage{amsbsy, amsopn, amscd, amsxtra, amsthm,authblk}
\usepackage{algorithmic,algorithm,adjustbox}
\usepackage{upref}
\usepackage{geometry}
\usepackage[mathlines]{lineno}
\usepackage{float}
\usepackage{yhmath}
\usepackage{booktabs}
\usepackage{subcaption}
\usepackage{multirow}
\usepackage{makecell}
\usepackage[colorlinks,
            linkcolor=red,
            anchorcolor=red,
            citecolor=red
            ]{hyperref}
\usepackage[nameinlink,noabbrev]{cleveref}
\usepackage{xcolor}
\usepackage[most]{tcolorbox}

\usepackage[shortlabels]{enumitem}

\numberwithin{equation}{section}
\numberwithin{figure}{section}
\numberwithin{table}{section}

\newcommand{\R}{\mathbb{R}}

\newcommand{\dd}{\mathrm{d}}
\newcommand{\cD}{\mathcal{D}}
\newcommand{\pc}{\mathbin{\bar{*}}}

\definecolor{dkgreen}{rgb}{0,0.6,0}
\definecolor{codegray}{rgb}{0.5,0.5,0.5}
\definecolor{mauve}{rgb}{0.58,0,0.82}

\newtcolorbox{mybox}{
  enhanced jigsaw,
  breakable,
  colback=black!0,      % White background.
  colframe=black!100,   % Black border.
  boxrule=0.5pt,
  arc=0pt,
  left=7pt,
  right=7pt,
  top=5pt,
  bottom=5pt
}

\newtheorem{theorem}{Theorem}[section]
\newtheorem{lemma}{Lemma}[section]
\newtheorem{proposition}{Proposition}[section]
\newtheorem{remark}{Remark}[section]

\newtheorem{definition}{Definition}[section]
\newtheorem{assumption}{Assumption}[section]

\makeatletter
\@mparswitchfalse%Put all sidenotes on the right.
\makeatother
\title{A discrete fractional Gr\"onwall inequality for dissipative systems
via resolvent kernel estimate}

\author[a]{
Lei Li\thanks{Email: leili2010@sjtu.edu.cn}
}
\author[a]{Yunxiao Liu \thanks{E-mail: lyx112358@sjtu.edu.cn}}
\author[b]{
Jiwei Zhang  \thanks{Email: jiweizhang@whu.edu.cn}
}

\affil[a]{School of Mathematical Sciences, Institute of Natural Sciences, MOE-LSC, Shanghai Jiao Tong University, Shanghai 200240, China}

\affil[b]{School of Mathematics and Statistics, and Hubei Key Laboratory of Computational Science, Wuhan University, Wuhan, 430072, China}

\date{}

\date{}

\begin{document}

\maketitle

\begin{abstract}
We establish a discrete fractional Gr\"onwall inequality for dissipative systems with time-dependent forcing.
For completely positive discretizations, we bound individual resolvent entries by integrals of the continuous response kernel, under controlled perturbation to the L1 weights and a stepsize restriction.
The proof constructs a supersolution for each resolvent column.
The resulting Mittag--Leffler weights retain both the decay of initial data and the historical contributions of forcing.
The framework covers L1 and several standard schemes on uniform meshes.
For linear subdiffusion, it yields decay-preserving L1 error estimates separating first order and $(2-\alpha)$ order contributions, with stronger long-time decay under additional solution regularity.
\end{abstract}

\section{Introduction}

Time-fractional evolution equations describe dynamics with memory,
in which the present state depends on the entire past history
\cite{diethelm2010analysis,jin2021fractional}.
For dissipative problems, this memory produces algebraic relaxation
and makes the propagation of perturbations different from that in
classical parabolic equations \cite{vergara2015decay}.
A useful stability or error estimate should therefore retain both
the decay of the initial data and the time dependence of external
forcing. This is particularly important in numerical analysis,
where local consistency errors may be large near the initial time
but become smaller as the solution evolves.
The weak initial singularity of solutions and its effect on the
accuracy of the L1 method on graded meshes were analyzed in
\cite{stynes2017error,kopteva2019error}.
Replacing these errors by their maximum over the computational
interval can obscure the effect of dissipation and the resulting
long-time accuracy \cite{zhangzhangzhao-decay}.

The basic continuous model is the scalar fractional relaxation equation
\begin{equation}\label{eq:intro-model}
    D_0^\alpha u(t)+\lambda u(t)=f(t),
    \qquad u(0)=u_0,\qquad 0<\alpha<1,\quad\lambda>0,
\end{equation}
where $D_0^\alpha$ denotes the Caputo derivative with memory starting
at zero (see \cite{diethelm2010analysis,jin2021fractional}, or subsection \ref{subsec:continuouseq} below). Its solution separates into a homogeneous relaxation term
and a convolution of the forcing with a positive response kernel
\cite{diethelm2010analysis,jin2021fractional}:
\begin{equation}\label{eq:intro-representation}
    u(t)=u_0E_\alpha(-\lambda t^\alpha)
       +\lambda^{-1}\int_0^t r_\lambda(t-s)f(s)\,ds,
    \qquad
    r_\lambda(t)=-\frac{d}{dt}E_\alpha(-\lambda t^\alpha).
\end{equation}
Here $E_\alpha$ is the Mittag--Leffler function.
The resolvent kernel $r_\lambda$ determines how a perturbation
introduced at an earlier time contributes to the current solution.
Resolvent methods for Volterra equations with positive kernels have
a classical foundation in
\cite{miller1968volterra,clement1981asymptotic}.
Their use in fractional dynamics includes the monotonicity analysis
of autonomous fractional ordinary differential equations in
\cite{feng2018continuous}.
A more practically useful model is the fractional inequality of the type
\begin{equation}\label{eq:modelineq}
    D_0^\alpha v(t)\le -\lambda v(t)+f(t),
\end{equation}
which can give estimates to some energy functionals and norms for the solutions.
One can bound $v(\cdot)$ by $u$ if $u(0)\ge v(0)$ and such inequalities for bounding $v$ generally belong to the class of Gr\"onwall inequalities \cite{ramirez2012generalized,fllx18note,paper}. 

Let $0=t_0<t_1<\cdots$ be the grid points and $\tau_j=t_j-t_{j-1}$.
Consider the discrete fractional inequality with discrete Caputo operator $\cD_{\tau}^{\alpha}$
\begin{equation}\label{eq:intro-discrete}
\cD_\tau^\alpha v_n+\lambda v_n\le f_n,\qquad   \cD_\tau^\alpha v_n
       :=\sum_{j=1}^n c_{n-j}^n(v_j-v_{j-1}).
\end{equation}
Establishing explicit bounds for the discrete time fractional inequalities, or the discrete Gr\"onwall inequalities, is of great significance to get a priori bounds, to prove stability and to obtain sharp error estimates of numerical schemes.

Several discrete fractional Gr\"onwall inequalities have been established in \cite{feng2018continuous,liao2018sharp,liao2019discrete}. Discrete fractional Gr\"onwall inequalities and convolution-based
consistency estimates for the nonuniform L1 formula were developed in \cite{liao2018sharp}.
The one in \cite{liao2019discrete}  applies to a class of discretizations on nonuniform meshes and allows time-dependent forcing, whose contributions are controlled by weighted sums involving complementary discrete kernels. However, its growth-type estimates involve a Mittag--Leffler factor that increases in time  and are not directly applicable to estimate the decaying rate of solutions. Often the error control for dissipative systems like the subdiffusion problems rely on the maximum principles \cite{stynes2017error} so it cannot get the decay bounds for the numerical solutions.
For dissipative systems, retaining this decay mechanism is important for understanding the long-time behavior of numerical solutions and the propagation of errors. On uniform meshes, results on dissipativity, and Mittag--Leffler stability of numerical solutions have been established in \cite{wangxiaozou2020long,wangzou2023stability} but the results are not easily extended to useful Gr\"onwall inequalities in general cases.
Using the framework of pseudo-convolution \cite{fengli2023a,fengli2024monotonicity}, comparison principles and the first discrete fractional Gr\"onwall inequalities for dissipative systems were established in \cite{paper}, with explicit estimates that capture the effect of dissipation. These dissipative estimates concern constant forcing and provide decay bounds for numerical solutions and uniform-in-time error control. For time-dependent forcing, they can still be applied by replacing the forcing sequence with its maximum over the preceding time levels. The resulting bounds, however, do not retain information about when individual forcing terms act or how their contributions decay over time. This information is particularly important for propagating consistency errors with pronounced time dependence.

In this work, we study dissipative discrete fractional inequalities with time-dependent forcing, aiming to retain both the decay induced by dissipation and the individual historical contributions of the forcing. By comparing individual entries of the discrete resolvent with integrals of the continuous response kernel over the corresponding time intervals
\begin{equation}\label{eq:intro-resolvent}
    0\le (R_\lambda)_{n-m}^n
       \le A\int_{t_{m-1}}^{t_m}r_\lambda(t_n-s)\,ds,
    \qquad 1\le m\le n\le N.
\end{equation}
we obtain a Gr\"onwall inequality with explicit Mittag--Leffler weights that control the contribution of each forcing term to the current solution, providing a tool for decay-preserving error analysis.
We prove this estimate for completely positive discretizations on quasi-uniform meshes, whose weights satisfy specified two-sided
bounds relative to the L1 weights.  The proof constructs a supersolution for each column of the
discrete resolvent.  The assumptions cover the L1 scheme, and some typical complete monotoniicity-preserving schemes on uniform meshes  \cite{lubich1986discretized,liwang2021cm}.

As an application, we derive a decay-preserving temporal error
estimate for the L1 approximation of a linear subdiffusion equation
with dissipative gap $\lambda=\lambda_1-\kappa>0$.
For general smooth forcing on fixed time intervals, the estimate
separates a first-order contribution, weighted by
$(1+\lambda t_n^\alpha)^{-2}$, from a contribution of order
$2-\alpha$, weighted by $(1+\lambda t_n^\alpha)^{-1}$.
Under an additional all-time derivative decay assumption for forcing
approaching a stationary limit, the second weight improves to
$(1+\lambda t_n^\alpha)^{-3}$.
This gives a two-term estimate motivated by the L1 conjecture in
\cite{zhangzhangzhao-decay}, within our regularity, mesh, and
step-size conditions.
It also clarifies how early and recent consistency errors are
propagated differently.
The result concerns temporal error and does not resolve the L2
part of that conjecture or the coarse-mesh regimes outside the
step-size restriction.

The paper is organized as follows.
Section~\ref{sec:setup} introduces the continuous and discrete
resolvents and the pseudo-convolution framework.
Section~\ref{sec:mainresult} contains the main results of the paper,
which states the assumptions and establishes the resolvent estimate
and the resulting Gr\"onwall inequality.
Section~\ref{sec:proof-supersolution} proves the supersolution property.
Section~\ref{sec:standard-schemes} verifies the assumptions for
standard discretizations, and Section~\ref{sec:linear-subdiffusion}
applies the inequality to linear subdiffusion problems.

\section{Preliminaries and problem setup }\label{sec:setup}

In this section, we present some basic definitions, and introduce the relevant concepts and notations.
In particular, we introduce the concepts of resolvent kernels, the pseudo-convolution and the related discrete resolvent
kernels. We then set up the problem to be investigated in this paper.

\subsection{Continuous equation and continuous resolvents }\label{subsec:continuouseq}

Consider the fractional ODE (FODE) \eqref{eq:intro-model} for $\alpha\in (0, 1)$.
The index $0$ indicates that the memory is counted from $t=0$. The Caputo derivative for absolutely continuous function $u$ is defined by
\begin{gather}\label{eq:captra}
D_0^{\alpha}u=\frac{1}{\Gamma(1-\alpha)}\int_0^t\frac{u'(s)}{(t-s)^{\alpha}}ds,
\end{gather}
where $\Gamma(\cdot)$ is the gamma function. Often the comparison principles may involve functions $u$ that are continuous but not absolutely continuous  so the generalized definitions of Caputo derivatives in \cite{liliu18frac,liliu2018compact} might be considered in these cases. The equation is equivalent to the integral equation (see \cite{diethelm2010analysis} and also \cite{liliu18frac,liliu2018compact} for generalized versions)
\begin{gather}\label{eq:integraleq}
u(t)=u_0+\frac{1}{\Gamma(\alpha)}\int_0^{t}(t-s)^{\alpha-1}(-\lambda u(s)+f(s))\,ds.
\end{gather}
In other words, the FODE is equivalent to a Volterra equation with the Abel integral kernel
\begin{gather}\label{eq:kernelfode}
g_{\alpha}(t):=\frac{1}{\Gamma(\alpha)}t_+^{\alpha-1}.
\end{gather}

We review a useful tool for analyzing the fractional derivatives, i.e., the resolvent kernels. 
In \cite{feng2018continuous}, the resolvent kernels (see \cite{clement1981asymptotic,miller1968volterra}) have been used to establish the monotonicity of the solutions to autonomous fractional ODEs. This has then been generalized to discrete schemes on nonuniform meshes in \cite{fengli2023a}.
\begin{definition}\label{def:resol}[see \cite{clement1981asymptotic,miller1968volterra}]
Let $\lambda>0$. The resolvent kernels $r_{\lambda}$ and $s_{\lambda}$ for $a$ are defined respectively by
\begin{gather}
    r_{\lambda}+\lambda r_{\lambda}*a=\lambda a,\quad   s_{\lambda}+\lambda s_{\lambda}* a=1_{t\ge 0}.
\end{gather}
\end{definition}
It has been shown in \cite[Proposition 2.1]{fengli2023a} that both $r_{\lambda}$ and $s_{\lambda}$ are completely monotone for all $\lambda>0$ if $a$ is completely monotone. We recall that a function $a: (0,\infty)\to \R$ is said to be completely monotone if $(-1)^n a^{(n)}(t)\ge 0$ for $n=0,1,\cdots$. If a completely monotone function is not identically zero, it is then strictly positive by the Bernstein theorem \cite{widder41,ssv12}. For the kernel $a=g_{\alpha}$, $r_{\lambda}$ and $s_{\lambda}$ are thus strictly positive. In fact, it has been mentioned in the proof of \cite[Lemma 3.4]{feng2018continuous} that the resolvent $r_{\lambda}$ for $g_{\alpha}=\frac{1}{\Gamma(\alpha)}t_+^{\alpha-1}$ is
\begin{gather}\label{eq:rlambdaDef}
    r_{\lambda}(t)=-\frac{d}{dt}E_{\alpha}(-\lambda t^{\alpha})= \lambda t^{\alpha-1}
      E_{\alpha,\alpha}(-\lambda t^\alpha)>0.
\end{gather}
Here, we recall the Mittag--Leffler function
\begin{equation}\label{eq:powerseriesofml}
    E_{\alpha,\beta}(z)
    :=\sum_{k=0}^{\infty}\frac{z^k}{\Gamma(\alpha k+\beta)},
    \qquad \alpha>0,\quad z\in\mathbb C.
\end{equation}
We write $E_\alpha:=E_{\alpha,1}$ and it is clear that
$E_\alpha'(z)=\alpha^{-1}E_{\alpha,\alpha}(z)$.

Using the definition of resolvents, it is straightforward to find that the solution in \eqref{eq:intro-model} satisfies
\begin{gather}\label{eq:continuousrepresentation}
    \begin{split}
        u(t) &=u_0s_{\lambda}+\lambda^{-1}r_{\lambda}*f\\
        &=u_0 E_{\alpha}(-\lambda t^{\alpha})+\alpha \int_0^t (t-s)^{\alpha-1}E_{\alpha}'(-\lambda(t-s)^{\alpha})f(s)\,ds.
    \end{split}
\end{gather}
Hence,  $s_{\lambda}$ and $r_{\lambda}$ are the fundamental solutions for equation \eqref{eq:intro-model}.

\begin{lemma}\label{d:lem:ml}
The function $r_\lambda$ is positive and decreasing on
$(0,\infty)$, and
\begin{equation}\label{eq:rlambdaasy}
    r_\lambda(t)
    \asymp_\alpha
    \frac{\lambda t^{\alpha-1}}
         {(1+\lambda t^\alpha)^2},
    \qquad t>0.
\end{equation}
\end{lemma}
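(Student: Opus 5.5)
Positivity and monotonicity come from complete monotonicity. Since $g_\alpha$ is completely monotone, \cite[Proposition 2.1]{fengli2023a} (recalled above) gives that $r_\lambda$ is completely monotone and not identically zero. By the Bernstein theorem it is then strictly positive, and $r_\lambda'\le 0$. To get strict decrease, I note that a completely monotone function that is not strictly decreasing must be constant: it is a Laplace transform of a positive measure, and if $r'(t_0)=0$ at some point the measure is supported at $0$. A constant $r_\lambda\equiv c>0$ would give $r_\lambda(t)=\lambda t^{\alpha-1}E_{\alpha,\alpha}(-\lambda t^\alpha)$ blowing up as $t\to0^+$, a contradiction. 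Hence $r_\lambda$ is positive and strictly decreasing.

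For the two-sided bound \eqref{eq:rlambdaasy}, I write $z=\lambda t^\alpha>0$. By \eqref{eq:rlambdaDef}, $r_\lambda(t)=\lambda t^{\alpha-1}E_{\alpha,\alpha}(-z)$, so it suffices to prove
\[
E_{\alpha,\alpha}(-z)\asymp_\alpha (1+z)^{-2},\qquad z\ge 0.
\]
The function $z\mapsto E_{\alpha,\alpha}(-z)$ is continuous and strictly positive on $[0,\infty)$. Positivity follows from $r_\lambda>0$, and its value at $z=0$ is $1/\Gamma(\alpha)>0$. Therefore on any compact interval $[0,Z]$ both it and $(1+z)^{-2}$ are bounded above and below by positive constants depending only on $\alpha$ and $Z$.

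It remains to treat large $z$, using the classical asymptotic expansion
\[
E_{\alpha,\beta}(-z)=\sum_{k=1}^{M}\frac{(-1)^{k+1}z^{-k}}{\Gamma(\beta-\alpha k)}+O(z^{-M-1}),\qquad z\to+\infty,\ 0<\alpha<1,
\]
quoted from \cite{diethelm2010analysis}. For $\beta=\alpha$, the $k=1$ term vanishes because $1/\Gamma(0)=0$. The $k=2$ term is $-z^{-2}/\Gamma(-\alpha)$, and since $\Gamma(-\alpha)<0$ for $\alpha\in(0,1)$, this equals $z^{-2}/|\Gamma(-\alpha)|>0$. Hence
\[
E_{\alpha,\alpha}(-z)=\frac{z^{-2}}{|\Gamma(-\alpha)|}+O(z^{-3}),
\]
which is comparable to $(1+z)^{-2}$ for $z\ge Z$ with $Z=Z(\alpha)$ large. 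Combining the compact range and the tail gives the two-sided estimate with constants depending only on $\alpha$.

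The only delicate point is the large-$z$ asymptotic: one must check that the leading $z^{-1}$ coefficient vanishes and that the $z^{-2}$ coefficient is positive.

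As a cross-check, or as an alternative avoiding the expansion, one can differentiate the known bound $E_\alpha(-z)\asymp (1+z)^{-1}$. Alternatively, use the Laplace representation
\[
r_\lambda(t)=\frac{\sin(\alpha\pi)}{\pi}\int_0^\infty e^{-st}\frac{\lambda s^{\alpha}}{s^{2\alpha}+2\lambda s^\alpha\cos(\alpha\pi)+\lambda^2}\,ds.
\]
Here the denominator is $\asymp (s^\alpha+\lambda)^2$ with constants depending on $\alpha$, since $|\cos\alpha\pi|<1$. Substituting $s=\sigma/t$ then yields
\[
r_\lambda(t)\asymp \lambda t^{\alpha-1}\int_0^\infty e^{-\sigma}\frac{\sigma^\alpha}{(\sigma^\alpha+z)^2}\,d\sigma\asymp\frac{\lambda t^{\alpha-1}}{(1+z)^2},
\]
by splitting into the cases $z\le1$ and $z>1$. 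This second route also independently confirms positivity.
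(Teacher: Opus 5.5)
Your proof is correct and follows the paper's own argument. Positivity and monotonicity come from complete monotonicity, and the two-sided bound comes from positivity and continuity of $E_{\alpha,\alpha}(-z)$ on compact sets together with the large-$z$ limit $z^2E_{\alpha,\alpha}(-z)\to -1/\Gamma(-\alpha)=\alpha/\Gamma(1-\alpha)$, which is exactly the limit the paper quotes from \cite{gorenflo2002}. Your Laplace-representation alternative is also sound; since $\widehat{r_\lambda}(z)=\lambda/(z^\alpha+\lambda)$, it is essentially the $\kappa=0$ case of the appendix argument for $\phi_{\lambda,\kappa}$. Drop the aside about differentiating $E_\alpha(-z)\asymp(1+z)^{-1}$, however: two-sided bounds on a function do not by themselves give bounds on its derivative.
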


Here and below, for nonnegative quantities $X$ and $Y$,
we write $X\asymp_{\mathcal P}Y$ if $cY\le X\le CY$
for some positive constants $c$ and $C$ depending only
on the parameters in $\mathcal P$.

\begin{proof}
By \cite{paper}, $r_\lambda$ is positive
and completely monotone, hence decreasing.
The power series definition and the asymptotic expansion
in \cite{gorenflo2002}, with $\beta=\alpha$, give
\begin{equation}
    \lim_{z\to0^+}E_{\alpha,\alpha}(-z)
    =\frac1{\Gamma(\alpha)},
    \qquad
    \lim_{z\to\infty}z^2E_{\alpha,\alpha}(-z)
    =\frac{\alpha}{\Gamma(1-\alpha)}.
\end{equation}
Since $E_{\alpha,\alpha}(-z)$ is positive and continuous
on $[0,\infty)$, these limits imply that
$(1+z)^2E_{\alpha,\alpha}(-z)$ is bounded above and below
by positive constants depending only on $\alpha$.
Thus,
\begin{equation}
    E_{\alpha,\alpha}(-z)
    \asymp_\alpha\frac1{(1+z)^2},
    \qquad z\ge0.
\end{equation}
Taking $z=\lambda t^\alpha$ and using
\eqref{eq:rlambdaDef}
proves \eqref{eq:rlambdaasy}.
\end{proof}

\subsection{Time discretization and pseudoconvolution}

We now consider the discrete operator in \eqref{eq:intro-discrete}.
Let the computational time interval be $[0, T]$, and $0=t_0<t_1<t_2<\cdots<t_N=T$ be the grid points.  Define
\begin{gather}
    \tau_n:=t_n-t_{n-1}, \quad n\ge 1.
\end{gather}
We allow $T=\infty$ and in this case there are infinitely many grid points $t_n$ so that $N=\infty$. Let $u_n$ be the numerical solution at $t_n$. Define the maximum step
\begin{gather}
\tau:=\sup_{n\ge 1}\tau_n.
\end{gather}
Define
\begin{gather}
    \nabla_{\tau}u_j:=u_j-u_{j-1}.
\end{gather}
The operator in \eqref{eq:intro-discrete} is then written as
\begin{gather}\label{eq:DiscreteScheme}
    \cD_{\tau}^{\alpha}u_n:=\sum_{j=1}^n c_{n-j}^n \nabla_{\tau}u_j, \quad n\ge 1.
\end{gather}
We expect that this is an approximation of \eqref{eq:captra}. 
Since $\nabla_{\tau}u_j\approx u'(t_j)\tau_j$, $c_{n-j}^n$ can be understood as an approximation of the average of $g_{1-\alpha}(t_n-s)$ on $[t_{j-1}, t_j)$.

We arrange the kernel $(a_{n-j}^n)$ into a lower triangular array $A$ of the following form
\begin{gather}\label{eq:arraykernel}
    A=\begin{bmatrix}
        a_0^{1} &  &  &  &  \\
        a_1^{2}& a_0^{2} & & & \\
        \cdots & \vdots & \vdots &  &  \\
        a_{n-1}^{n} &  \cdots & a_1^{n} & a_0^{n} &\\
        \cdots & \vdots & \vdots &   & \vdots\\
    \end{bmatrix}.
\end{gather}
Recall the pseudo-convolution in \cite{fengli2023a} (also used in \cite{liao2019discrete,liao2020positive} for defining the ROC and related DCC kernels).   The pseudo-convolution $\pc$ between two array kernels $A, B$ and $C=A\bar{*}B$ is considered in \cite{fengli2023a}, and is given by \begin{gather}\label{eq:convnonuni}
c_{k}^n=\sum_{j=0}^k a_{k-j}^n b_j^{n+j-k}, \quad \text{or}\quad c_{n-k}^n=\sum_{j=k}^n a_{n-j}^n b_{j-k}^j.
\end{gather}
Clearly, when the kernel is arranged in the form \eqref{eq:arraykernel}, the pseudo-convolution is simply the matrix product, with possibly infinite size. Then, with the property of matrices, it is easily verified that the kernel $I$ with $I_{n-j}^n=\delta_{n,j}$ is the identity kernel, with $I\pc A=A\pc I=A$.
If $A$ has nonzero diagonals, then $A$ has an inverse $B$ with $A\pc B=I$, and this kernel $B$ also satisfies $B\pc A=I$ (see the proof in \cite{fengli2023a}).
The pseudo-convolution is also defined between a kernel and a vector $x=(x_1, x_2, \cdots)$ by
\begin{gather}
    y_{n}=\sum_{j=1}^{n}a_{n-j}^n x_{j}.
\end{gather}
then, it holds that
$A\pc(B\pc x)=(A\pc B)\pc x$.
For more details about the pseudo-convolution, one may refer to \cite{fengli2023a, paper}.

With the pseudo-convolution introduced, the scheme above is then written as
\begin{gather}\label{eq:b_and_c}
    \cD_{\tau}^{\alpha}u_n=B\pc (u_n-u_0),n\ge 1, \quad b_0^n=c_0^n, \quad
    b_{n-j}^n=c_{n-j}^n-c_{n-j-1}^n, j\le n-1.
\end{gather}
Suppose that $A$ is the inverse of $B$.
If $\xi_n:=\cD_{\tau}^{\alpha}u_n$, one then has
\begin{gather}
    u_n=u_0+(A\pc \xi)_n=u_0+\sum_{j=1}^n a_{n-j}^n \xi_j.
\end{gather}
This is the discrete analogue of \eqref{eq:integraleq}.

One may consider the resolvent $R_{\lambda}$ defined using the pseudo-convolution similar to the continuous case:
\begin{gather}\label{eq:Rdef}
    R_{\lambda}+\lambda R_{\lambda}\pc A=\lambda A
    \Longleftrightarrow
    A-R_{\lambda}\pc A=\frac{1}{\lambda}R_{\lambda}.
\end{gather}

\begin{definition}
    The kernel $A$ is called completely positive if for any
    $\lambda>0$, $R_{\lambda}$ has nonnegative entries, $0<(R_{\lambda})_0^n<1$
    and $\sum_{j=1}^n (R_{\lambda})_{n-j}^n\le 1$. If $A$ is completely positive, we say the discretization is completely positive.
\end{definition}

The kernel $A$ (or the discretization) is completely positive if and only if $B$ satisfies the following conditions.
\begin{gather}\label{eq:Bcondition}
    \begin{split}
        & b_0^n>0, \quad b_{n-j}^n \le 0, \quad \forall j<n, n\ge 1,\\
        & \sum_{j=1}^n b_{n-j}^n \ge 0, \quad \forall n\ge 1.
    \end{split}
\end{gather} 
Equivalently, in matrix notation, we have
\begin{equation}
    B_{n,n}>0,\quad
    B_{n,j}\le 0\quad (1\le j<n),\quad
    B\mathbf{1}\ge 0,
\end{equation}
where $B_{n,j}=b_{n-j}^n$ and $\mathbf{1}$ denotes the vector
whose entries are all equal to one.

Completely positive discretization enjoys the following comparison principles \cite{paper}.
\begin{theorem}\label{thm:implicitcp}
Suppose that the discretization is completely positive. If $y_0\ge z_0$ and
\begin{equation}
    \cD_{\tau}^{\alpha} y_n\ge F(t, y_n),
    \quad \cD_{\tau}^{\alpha} z_n\le F(t, z_n), \forall n\ge 1,
\end{equation}
then the following comparison principle holds. 
\begin{enumerate}[(1)]
    \item If $F(t,\cdot)$ is nonincreasing for all $t$, then $y_n \ge z_n$ for all $n$.  If $C$ has positive entries, then $y_0>z_0$ implies that $y_n>z_n$.
    \item Suppose that $F(t,\cdot)$ is uniformly Lipschitz such that $|\partial_u F(t, u)|\le M$ for all $t,u$, 
and the discretization satisfies the stability condition $M/c_0^n<1$, then one has $y_n\ge z_n$.  If $c_{n-j}^n>0$, then $y_0>z_0$ implies that $y_n>z_n$.
\end{enumerate}
\end{theorem}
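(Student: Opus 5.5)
We prove the comparison principle by reducing it to a sign property of the lower triangular matrix $B$ in \eqref{eq:b_and_c}, using the structure \eqref{eq:Bcondition}, and then inducting on $n$. Set $w_n:=z_n-y_n$, so $w_0\le 0$, and the goal is $w_n\le 0$ for all $n$. Subtracting the two inequalities and writing $\cD_\tau^\alpha$ as $\sum_{j=1}^n c_{n-j}^n\nabla_\tau$, we get
\begin{equation*}
\sum_{j=1}^n c_{n-j}^n (w_j-w_{j-1}) \le F(t_n,z_n)-F(t_n,y_n).
\end{equation*}
Summation by parts turns the left side into
\begin{equation*}
c_0^n w_n+\sum_{j=1}^{n-1}(c_{n-j-1}^n-c_{n-j}^n)(-w_j)\cdot(-1)-c_{n-1}^n w_0 .
\end{equation*}
In terms of $B$, this reads $b_0^n w_n+\sum_{j=1}^{n-1} b_{n-j}^n w_j-\big(\sum_{j=1}^n b_{n-j}^n\big)w_0$, since $c_{n-1}^n=\sum_{j=1}^n b_{n-j}^n$.

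\textbf{Induction.} Assume $w_j\le 0$ for $0\le j\le n-1$. By \eqref{eq:Bcondition}, $b_{n-j}^n\le 0$ for $j<n$, so each term $b_{n-j}^n w_j\ge 0$. Also $\sum_{j=1}^n b_{n-j}^n\ge 0$ and $w_0\le0$, so $-(\sum b)w_0\ge 0$. Hence
\begin{equation*}
b_0^n w_n\le F(t_n,z_n)-F(t_n,y_n).
\end{equation*}

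\emph{Case (1).} Suppose $w_n>0$, i.e.\ $z_n>y_n$. Since $F(t_n,\cdot)$ is nonincreasing, the right side is $\le 0$. But $b_0^n w_n>0$, a contradiction. So $w_n\le0$.

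\emph{Case (2).} By the mean value theorem, $F(t_n,z_n)-F(t_n,y_n)\le M|w_n|$. If $w_n>0$, we get $c_0^n w_n\le M w_n$ (recall $b_0^n=c_0^n$), which contradicts $M/c_0^n<1$. So $w_n\le0$.

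\textbf{Strict inequalities.} When $w_0<0$, the same identity propagates strictness through the positivity assumptions. In (2), if $c_{n-1}^n>0$, the $w_0$ term contributes $-c_{n-1}^n w_0>0$. In case (2) this gives $(c_0^n-M)w_n<0$ when $w_n\ge 0$, forcing $w_n<0$. In case (1), the same strictly positive term gives $c_0^n w_n< F(t_n,z_n)-F(t_n,y_n)\le 0$ if $w_n\ge0$, a contradiction. Here I read "$C$ has positive entries" as $c_{n-1}^n>0$.

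The main obstacle is keeping the signs straight in the summation by parts and matching the hypothesis "$C$ positive" to the boundary coefficient $c_{n-1}^n$. The rest is a routine one-step induction.
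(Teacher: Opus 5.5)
Your proof is correct and is essentially the standard argument. Writing $\cD_\tau^\alpha w_n=b_0^nw_n+\sum_{j=1}^{n-1}b_{n-j}^nw_j-c_{n-1}^nw_0$ and inducting on $n$ is exactly the expansion \eqref{d:eq:expand} that the paper uses in the proof of Lemma~\ref{lem:Runifbound}; the theorem itself is only quoted from \cite{paper}. Your reading of ``$C$ has positive entries'' as $c_{n-1}^n>0$ is equivalent under complete positivity, since $c_k^n$ is nonincreasing in $k$.

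One slip to fix: in your first summation-by-parts display, the coefficient of $w_j$ should be $-(c_{n-j-1}^n-c_{n-j}^n)=b_{n-j}^n$, not $+(c_{n-j-1}^n-c_{n-j}^n)$. The $B$-form you actually use afterwards is correct, so the argument is unaffected.
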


\subsection{The discrete problem and its resolvent representation}

Now, we consider the discrete inequality in \eqref{eq:intro-discrete}. 
We consider a comparison sequence $u_n$ satisfying
\begin{gather}
\cD_{\tau}^{\alpha}u_n=-\lambda u_n+f_n, \qquad u_0=v_0.
\end{gather}
Using the resolvent kernel, one finds that
\begin{gather}\label{eq:discreterepresentation}
    \begin{split}
        u_n&=u_0\pc(I-R_{\lambda})+\lambda^{-1}R_{\lambda}\pc f\\
        &=u_0(1-\sum_{j=1}^n (R_{\lambda})_{n-j}^n)
        +\lambda^{-1}\sum_{m=1}^n (R_{\lambda})_{n-m}^n f_m
    \end{split}
\end{gather}
Clearly, \eqref{eq:discreterepresentation} is the discrete analogue of \eqref{eq:continuousrepresentation} so that $R_{\lambda}$ is in fact the fundamental solution to the discrete numerical scheme.
By the comparison principle in Theorem \ref{thm:implicitcp}, if the discretization is completely positive, one then has
\begin{equation}\label{eq:unvncomp}
    v_n \le u_n=u_0\left(1-\sum_{j=1}^n (R_{\lambda})_{n-j}^n\right) +\lambda^{-1}\sum_{m=1}^n (R_{\lambda})_{n-m}^n f_m.
\end{equation}

Our approach is then to establish some explicit bounds for $R_{\lambda}$ so that the Gr\"onwall's inequality can be established.
We present the intuitive understanding using a continuous analogue here.  We consider the following illustrating problem.
\begin{gather}
    D_0^{\alpha}u=-\lambda u+f
\end{gather}
with $f(t)$ be the piecewise constant interpolation
\begin{gather}
    f(t)=\sum_{i=1}^{\infty} f_i 1_{(t_{i-1}, t_i]}
\end{gather}

Note the following.
\begin{gather}
    \alpha (t-s)^{\alpha-1}E_{\alpha}'(-\lambda (t-s)^{\alpha})
    =-\frac{1}{\lambda}\frac{d}{dt} E_{\alpha}(-\lambda (t-s)^{\alpha})
    =\frac{1}{\lambda}\frac{d}{ds}E_{\alpha}(-\lambda (t-s)^{\alpha}).
\end{gather}
Using \eqref{eq:continuousrepresentation}, the solution $u$ is then given by the following.
\begin{gather}
    u(t)=u_0E_{\alpha}(-\lambda t^{\alpha})+\lambda^{-1}\sum_{m=1}^{\infty}f_m \bar{u}_m(t),
\end{gather}
where
\begin{gather}
    \bar{u}_m(t)=
    \begin{cases}
        1-E_{\alpha}(-\lambda(t-t_{m-1})^{\alpha}), & t\le t_m\\
        E_{\alpha}(-\lambda(t-t_{m})^{\alpha})-E_{\alpha}(-\lambda(t-t_{m-1})^{\alpha}), & t>t_m
    \end{cases}
\end{gather}

It is not hard to see that $\bar{u}_m$ is the solution to the following equation
\begin{equation}
    D_0^{\alpha}\bar{u}_m= -\lambda \bar{u}_m+\lambda 1_{(t_{m-1}, t_m]},\quad \bar{u}_m(t)=0,\quad t\le t_{m-1},
\end{equation}
and admits the representation
\begin{equation}\label{eq:continuous-response-kernel}
    \bar{u}_m(t_n)
    =\int_{t_{m-1}}^{t_m}r_\lambda(t_n-s)\,ds
    =\int_{t_n-t_m}^{t_n-t_{m-1}}r_\lambda(s)\,ds,
    \qquad n\ge m.
\end{equation}
Clearly, $\bar{u}_m$ is absolutely continuous but $(\bar{u}_m)'$ blows up at $t_{m-1}$ and $t_m$.

\section{Resolvent estimates and the Gr\"onwall inequality}\label{sec:mainresult}

In this section, we establish the key estimates of the resolvent kernels and then conclude the discrete 
Gr\"onwall inequality for dissipative systems with varying force terms.
Table~\ref{tab:proof-structure} summarizes the proof strategy by placing the continuous pulse response in \eqref{eq:continuous-response-kernel} alongside the discrete supersolution construction.
The auxiliary kernel $\phi$ has the same algebraic tail as the pulse response $\bar{u}_m$, allowing comparison to transfer the continuous decay to each discrete resolvent entry.

\begin{table}[H]
\centering
\caption{Continuous analogue and discrete proof of the resolvent estimate.}
\label{tab:proof-structure}
\small
\begin{adjustbox}{max width=\textwidth}
\begin{tabular}{@{}lcc@{}}
\toprule
\multicolumn{3}{c}{$\cD_\tau^\alpha v_n+\lambda v_n\le f_n,\qquad\lambda>0$}\\[1mm]
\midrule
Step & Continuous analogue & Discrete proof\\
\midrule
Response &
$\begin{gathered}D_0^\alpha \bar{u}_m+\lambda \bar{u}_m=f_m,\quad \bar{u}_m(0)=0,\\
f_m(t)=\lambda\mathbf 1_{(t_{m-1},t_m]}(t)
\end{gathered}$ &
$(\cD_\tau^\alpha+\lambda)R_{\cdot,m}=\lambda\delta_{\cdot,m}$\\[4mm]
Kernel &
$\begin{gathered}r_\lambda(t)=-\dfrac{d}{dt}E_\alpha(-\lambda t^\alpha),\\[1mm]
r_\lambda(t)\asymp_\alpha\dfrac{\lambda t^{\alpha-1}}{(1+\lambda t^\alpha)^2},\quad t>0
\end{gathered}$ &
$\begin{gathered}\widehat\phi(z)=\dfrac{\lambda}{z^\alpha+\kappa z+\lambda},\\[1mm]
\phi(t)\asymp_\alpha\dfrac{\lambda t^{\alpha-1}}{(1+\lambda t^\alpha)^2},\quad t^{1-\alpha}\ge\kappa
\end{gathered}$\\[4mm]
Construction &
$\begin{gathered}\bar{u}_m(t_n)=\displaystyle\int_{t_{m-1}}^{t_m}r_\lambda(t_n-s)\,ds,\\[1mm]
\bar{u}_m(t)\asymp\dfrac{\tau_m}{\lambda}t^{-1-\alpha}\quad(t\to\infty)
\end{gathered}$ &
$\begin{gathered}W(t)=\begin{cases}(t/\eta)\phi(\eta),&t\le\eta,\\\phi(t),&t>\eta,\end{cases}\\[1mm]
V_m^n=C\tau_m W(t_n-t_{m-1})
\end{gathered}$\\[4mm]
Comparison & \makecell{Continuous reference response\\$\bar{u}_m(t_n)$} &
$\begin{gathered}(\cD_\tau^\alpha+\lambda)V_m\ge\lambda\delta_{\cdot,m},\\[1mm]
0\le R_{n-m}^n\le V_m^n\le A \bar{u}_m(t_n)
\end{gathered}$\\[2mm]
\midrule
\multicolumn{3}{c}{$\displaystyle
v_n\le v_0^+E_\alpha(-\rho^{-1}\lambda t_n^\alpha)
+\frac A\lambda\sum_{m=1}^n f_m^+\bar{u}_m(t_n)$}\\[2mm]
\bottomrule
\end{tabular}
\end{adjustbox}
\end{table}

\subsection{Assumptions and reference gap}
\label{subsec:L1}

Let $0<\alpha<1$ and
$g_{1-\alpha}(t)=t^{-\alpha}/\Gamma(1-\alpha)$ for $t>0$.
Define the L1 weights by
\begin{equation}\label{eq:ellnj}
    \ell_{n,j}
    :=\frac1{\tau_j}\int_{t_{j-1}}^{t_j}
    g_{1-\alpha}(t_n-s)\,ds
    =
    \frac{(t_n-t_{j-1})^{1-\alpha}-(t_n-t_j)^{1-\alpha}}
    {\Gamma(2-\alpha)\tau_j},
    \qquad 1\le j\le n\le N.
\end{equation}
The corresponding L1 discrete operator is
\begin{equation}\label{eq:L1scheme}
    \cD_{\mathrm{L1},\tau}^{\alpha}u_n
    =\sum_{j=1}^n\ell_{n,j}(u_j-u_{j-1}),
\end{equation}
which amounts to taking $c_{n-j}^n=\ell_{n,j}$
in the general discretization.
Since $g_{1-\alpha}$ is decreasing, the reference gap
\begin{equation}
    G_{n,j}
    :=\ell_{n,j}-g_{1-\alpha}(t_n-t_{j-1})
\end{equation}
is nonnegative.
For $j<n$, monotonicity gives
$\ell_{n,j}\le g_{1-\alpha}(t_n-t_j)$, and hence
\begin{equation}
    \frac{G_{n,j}}{\ell_{n,j}}
    \le
    1-\left(\frac{t_n-t_j}{t_n-t_{j-1}}\right)^\alpha
    \le
    \frac{\tau_j}{t_n-t_{j-1}}.
\end{equation}
For $j=n$, the same upper bound follows from
$G_{n,n}\le\ell_{n,n}$.
On the other hand, the convexity of $g_{1-\alpha}$
and Jensen's inequality give
$\ell_{n,j}\ge
g_{1-\alpha}(t_n-t_{j-1}-\tau_j/2)$.
Using the concavity of $x^\alpha$, we obtain
\begin{equation}
    \frac{G_{n,j}}{\ell_{n,j}}
    \ge
    1-\left(1-\frac{\tau_j}{2(t_n-t_{j-1})}\right)^\alpha
    \ge
    \frac{\alpha}{2}\frac{\tau_j}{t_n-t_{j-1}}.
\end{equation}
Combining these bounds yields
\begin{equation}\label{eq:Gnj-comparison}
    \frac{\alpha}{2}
    \frac{\tau_j}{t_n-t_{j-1}}\ell_{n,j}
    \le G_{n,j}
    \le
    \frac{\tau_j}{t_n-t_{j-1}}\ell_{n,j},
    \qquad 1\le j\le n\le N.
\end{equation}

We make the following assumptions for all $1\le j\le n\le N$:

\begin{assumption}\label{ass:proofcond}
There exist constants $K\ge1$, $\nu_{\mathrm{ker}}>0$,
and $M_{\mathrm{ker}}\ge0$ such that the following
conditions hold:
\begin{enumerate}[label=(\roman*)]
    \item\label{cond1} \textbf{Complete positivity.}
    \begin{equation}\label{ass:cp}
        B_{n,n}>0,\qquad
        B_{n,j}\le0\quad (j<n),\qquad
        B\mathbf{1}\ge0.
    \end{equation}

    \item\label{cond2} \textbf{Mesh condition.}
    \begin{equation}\label{ass:mesh}
        \frac{\max_{1\le j\le N}\tau_j}
             {\min_{1\le j\le N}\tau_j}\le K.
    \end{equation}

    \item\label{cond3}
    \textbf{Two-sided kernel bounds.}
    For all $1\le j\le n\le N$,
    \begin{equation}\label{cond4*}
        \max\{\nu_{\mathrm{ker}},1-\varepsilon_{n,j}\}
        \ell_{n,j}
        \le c_{n-j}^n
        \le(1+\varepsilon_{n,j})\ell_{n,j},
    \end{equation}
    where
    \begin{equation}\label{eq:epsilon}
        \varepsilon_{n,j}
        :=M_{\mathrm{ker}}
          \frac{\tau_j}{t_n-t_{j-1}}.
    \end{equation}
\end{enumerate}
The constants $K$, $\nu_{\mathrm{ker}}$, and
$M_{\mathrm{ker}}$ are independent of $\lambda$, $T$,
$N$, and the mesh refinement.
\end{assumption}

\begin{theorem}\label{thm:mainest}
Let $\lambda>0$ be given.
Suppose that Assumption~\ref{ass:proofcond} holds
and that the time steps satisfy
\begin{equation}\label{eq:stepsizeconstraint}
    \lambda\tau^\alpha\le1.
\end{equation}
Then there exists a constant $A>0$, depending only on
$\alpha$, $K$, $\nu_{\mathrm{ker}}$, and $M_{\mathrm{ker}}$,
such that
\begin{equation}\label{eq:resolvent-pointwise-bound}
    0\le R_{n-m}^n
    \le\min\{\beta_0,A \bar{u}_m(t_n)\},
    \qquad 1\le m\le n\le N,
\end{equation}
where $\beta_0=R_0^m$.
\end{theorem}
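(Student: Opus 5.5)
Fix $m$ and consider the $m$-th column $x_n:=R^n_{n-m}$, $n\ge m$, extended by $x_n=0$ for $n<m$. From $R_\lambda+\lambda R_\lambda\pc A=\lambda A$, multiplying on the right by $B=A^{-1}$ gives $R_\lambda\pc B+\lambda R_\lambda=\lambda I$. Because pseudo-convolution is matrix multiplication, the same relation holds with the inverse taken on the left, $B\pc R_\lambda+\lambda R_\lambda=\lambda I$. Columnwise this reads $(B\pc x)_n+\lambda x_n=\lambda\delta_{n,m}$, which says exactly that $\cD^\alpha_\tau x_n+\lambda x_n=\lambda\delta_{n,m}$ with $x_0=0$, as recorded in Table~\ref{tab:proof-structure}.

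\textbf{Elementary bounds.} Nonnegativity follows from complete positivity, i.e.\ the comparison principle Theorem~\ref{thm:implicitcp}(1) applied with $z=0$. For the bound by $\beta_0$: at $n=m$ we have $x_m=\lambda/(c_0^m+\lambda)=R_0^m$. For $n>m$, the equation has zero right-hand side and $x$ starts from the value $x_m$, so the $M$-matrix structure \eqref{ass:cp} gives a discrete maximum principle. Concretely, take $n$ with $x_n$ maximal among the indices $>m$. The row relation $b_0^nx_n=-\lambda x_n-\sum_{j<n}b^n_{n-j}x_j$, combined with $b\le0$ off the diagonal and $B\mathbf 1\ge0$, gives $x_n\le\max_{j<n}x_j$, hence $x_n\le x_m=\beta_0$.

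\textbf{Main bound via a supersolution.} I would build $V^n_m=C\tau_mW(t_n-t_{m-1})$ as in the table. The function $\phi$ is defined by its Laplace transform $\lambda/(z^\alpha+\kappa z+\lambda)$, with $\kappa$ a multiple of $\tau^{1-\alpha}$; it solves $D^\alpha\phi+\kappa\phi'+\lambda\phi=0$ with a jump-type source at $0$. The extra term $\kappa\phi'$ is what absorbs the discretization defects $G_{n,j}$ and $\varepsilon_{n,j}$. Here $W$ is linear on $[0,\eta]$ with $\eta\sim\tau$, and equals $\phi$ afterwards. The goal is
\[
(\cD_\tau^\alpha+\lambda)V_m\ge\lambda\delta_{\cdot,m}.
\]
Applying the comparison principle to $V_m-x$ (with $V^n_m\ge0$ for $n<m$) then gives $x_n\le V^n_m$. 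Finally, the asymptotics of $\phi$, namely $\phi\asymp\lambda t^{\alpha-1}/(1+\lambda t^\alpha)^2$ for $t\gtrsim\tau$, compared with Lemma~\ref{d:lem:ml}, the mesh ratio $K$, and $\bar u_m(t_n)=\int r_\lambda$ over an interval of length $\tau_m$, give $V^n_m\le A\bar u_m(t_n)$. For $n$ close to $m$, $W$ is bounded by $\phi(\eta)\eta\sim\lambda\tau^\alpha$-order terms and one compares directly with $\bar u_m(t_n)\gtrsim\int_0^{\tau_m}r_\lambda=1-E_\alpha(-\lambda\tau_m^\alpha)\asymp\lambda\tau_m^\alpha$; this uses $\lambda\tau^\alpha\le1$.

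\textbf{Main obstacle.} The hard step is verifying the supersolution inequality. I would proceed as follows.
\begin{enumerate}[(a)]
\item At $n=m$, choose $C$ large using the lower bound $c_0^m\ge\nu_{\mathrm{ker}}\ell_{m,m}\sim\tau^{-\alpha}$.
\item For $n>m$, write $\cD_\tau^\alpha V=\sum_j c^n_{n-j}\nabla V_j$. Sum by parts against the L1 weights to compare with $\int g_{1-\alpha}(t_n-s)W'(s)\,ds$, noting that $W$ is increasing then decreasing, so the increments change sign.
\item Control the error by $G_{n,j}$ using \eqref{eq:Gnj-comparison}, and perturb from $\ell$ to $c$ with $\varepsilon_{n,j}$. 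Both corrections are of relative size $\tau_j/(t_n-t_{j-1})$.
\item Show these corrections are dominated by the positive margin that $\kappa|\phi'|$ supplies once $\kappa\ge c(\alpha,K,M_{\mathrm{ker}},\nu_{\mathrm{ker}})\tau^{1-\alpha}$. This needs pointwise estimates of $\phi'$ and $\phi''$ from the Laplace representation, for example via a Bernstein-type or contour argument showing $\phi$ is completely monotone.
\end{enumerate}
I expect the delicate regime to be $t_n-t_{m}\sim\tau$, where the linear ramp matters. If this approach fails, I would instead check the inequality separately on the ramp region and the tail region.
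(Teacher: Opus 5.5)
Your overall architecture matches the paper's. The pieces are the column equation $(\cD_\tau^\alpha+\lambda)R_{\cdot,m}=\lambda\delta_{\cdot,m}$, the maximum-principle bound by $\beta_0$, the supersolution $C\tau_m\mathcal W_m(x_n)$ built from $\widehat\phi=\lambda/(z^\alpha+\kappa z+\lambda)$, comparison, and $\mathcal W_m\lesssim r_\lambda$. The gap is in step (d), where the whole difficulty lies. The margin $\kappa|\phi'|$ cannot absorb the defects uniformly in $n$.

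Defects created on early cells are carried to a late node $x_n=t_n-t_{m-1}\gg\eta_m$ by the memory kernel, with an $x_n^{-1-\alpha}$ tail. Take the L1 scheme itself, where there is no weight perturbation. Convexity of $\phi$ makes the interpolation defect one-signed and unfavourable. The cells in $[\eta_m,2\eta_m]$ alone contribute
\[
E_m(x_n)\gtrsim x_n^{-1-\alpha}\int_{\eta_m}^{2\eta_m}\mathfrak e_m(s)\,\dd s\gtrsim \tau^2\lambda\eta_m^{\alpha-2}\,x_n^{-1-\alpha}.
\]
For a general scheme, weight perturbations on the ramp cells can be of size $M_{\mathrm{ker}}\tau\phi_m(\eta_m)x_n^{-1-\alpha}$. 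By contrast, \eqref{eq:phiasymp} gives
\[
\kappa|\phi'(x_n)|\asymp\kappa\lambda x_n^{\alpha-2}/(1+\lambda x_n^\alpha)^2\asymp\kappa\lambda^{-1}x_n^{-2-\alpha}
\]
once $\lambda x_n^\alpha\ge1$, which is weaker by a full power of $x_n$. So no choice of the constant in $\kappa\sim\tau^{1-\alpha}$ works for all $n$, and the estimate must be uniform in $N$ and $T$.

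The missing idea is that the ramp is not merely a device to start from $\mathcal W_m(0)=0$. The truncation $\mathcal Z_m=\phi_m-\mathcal W_m\ge0$ is supported in $[0,\eta_m]$, and it produces an extra positive residual with exactly the right tail:
\[
(D_{\mathrm C}^\alpha+\lambda)\mathcal W_m=\mathcal H_m-\kappa_m\phi_m',\qquad \mathcal H_m(t)=\frac{\alpha}{\Gamma(1-\alpha)}\int_0^{\eta_m}\mathcal Z_m(s)(t-s)^{-1-\alpha}\,\dd s\ge c_\alpha\,\eta_m\phi_m(\eta_m)\,t^{-1-\alpha}.
\]
This term would appear if you computed $\int g_{1-\alpha}(t_n-s)\mathcal W_m'(s)\,\dd s$ exactly. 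However, your plan neither identifies it nor quantifies its tail, and assigns the absorption to $\kappa|\phi'|$ instead.

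In the paper, all the old defects are bounded by $C\bigl((\tau/\eta_m)^{1-\alpha}+(\tau/\eta_m)^{2-\alpha}+\tau/\eta_m\bigr)\mathcal H_m(x_n)$. These are the interpolation error $E_m$, the ramp-cell weight error, and the weight error on cells with $x_{j-1}<x_n/2$. They are absorbed by taking $\eta_m\ge L_*\tau$ with $L_*$ large. The term $\kappa_m|\phi_m'|$ is needed only for the recent weight perturbation, which is $\lesssim M_{\mathrm{ker}}\tau^{1-\alpha}|\phi_m'(x_n)|$. This fixes the order in which constants are chosen:
\begin{enumerate}[(1)]
\item first $D_*\gtrsim M_{\mathrm{ker}}$;
\item then $L_*$ with $D_*L_*^{-(1-\alpha)}\le1$, so the asymptotics of $\phi_m$ hold on $[\eta_m,\infty)$, and with $\delta(L_*)\le1/2$.
\end{enumerate}

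Relatedly, the regime you flagged as delicate is the easy one. For $x_n\le\eta_m$ all increments of $\mathcal W_m$ are nonnegative, and the residual $\frac{\phi_m(\eta_m)}{\eta_m}\bigl(\sum_j c_{n-j}^n\tau_j+\lambda x_n\bigr)$ is trivially positive. The hard regime is the tail $x_n\gg\eta_m$, which your plan does not yet handle.
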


\subsection{The equation for the discrete resolvent}

Multiplying \eqref{eq:Rdef} on the right by $B$
and using $A\pc B=I$, we obtain, in matrix notation,
\begin{equation}
    R_\lambda(B+\lambda I)=\lambda I.
\end{equation}
Since these are square matrices and $\lambda>0$,
it follows that $R_\lambda=\lambda(B+\lambda I)^{-1}$.
Hence,
\begin{equation}\label{eq:BandR}
    (B+\lambda I)R_\lambda=\lambda I.
\end{equation}

We write $R_{n-m}^n=(R_\lambda)_{n-m}^n$.
For a fixed $m$, taking the $(m,m)$ entry gives
\begin{equation}\label{eq:beta0def}
    \beta_0:=R_0^m
    =\frac{\lambda a_0^m}{1+\lambda a_0^m}
    =\frac{\lambda}{c_0^m+\lambda},
\end{equation}
where $a_0^m=1/b_0^m=1/c_0^m$.
Although denoted by $\beta_0$, this quantity depends on
the fixed index $m$ and the parameter $\lambda$.
Condition~\ref{cond3} gives
\begin{equation}\label{eq:c0mest}
    \frac{\nu_{\mathrm{ker}}\tau_m^{-\alpha}}
         {\Gamma(2-\alpha)}
    \le c_0^m  \le  \frac{(1+M_{\mathrm{ker}})\tau_m^{-\alpha}}
         {\Gamma(2-\alpha)}.
\end{equation}

Taking the $(n,m)$ entry of
\eqref{eq:BandR}, we obtain
\begin{equation}\label{eq:bRidentity}
    \sum_{j=m}^n b_{n-j}^n R_{j-m}^j
    +\lambda R_{n-m}^n
    =\lambda\delta_{n,m},
    \qquad n\ge m.
\end{equation}
This is the equation for the resolvent kernel. Extending the $m$th column by zero for $0\le j<m$
and using \eqref{eq:b_and_c}, we can equivalently write
\begin{equation}\label{eq:Ridentity}
    \sum_{j=m}^n c_{n-j}^n
    \bigl(R_{j-m}^j-R_{j-1-m}^{j-1}\bigr)
    +\lambda R_{n-m}^n
    =\lambda\delta_{n,m},
    \qquad n\ge m,
\end{equation}
where the term $R_{-1}^{m-1}$ is understood to be zero.

By \eqref{cond4*}, the weights satisfy
\begin{equation}
    \nu\ell_{n,j}\le c_{n-j}^n\le\rho\ell_{n,j},
\end{equation}
where we take
$\rho=(1+M_{\mathrm{ker}})/(1-\alpha)$ and
$\nu=\min\{1,\nu_{\mathrm{ker}}\}$,
so that $\rho\ge1\ge\nu>0$.
Applying the results of \cite{paper} with
$v_0=1$ and $f_n=0$, or with
$v_0=0$ and $f_n=\mathbf{1}_{\{n\ge m\}}$,
yields the following cumulative bounds for $R$.
\begin{lemma}\label{eq:sumR}
 With the stated assumptions, one has
    \begin{equation}
        0\le 1-\sum_{j=1}^n R_{n-j}^n \le  E_{\alpha}(-\rho^{-1}\lambda t_n^{\alpha}).
    \end{equation}
 Moreover, one also has
    \begin{equation}
        \lambda^{-1}\sum_{j=m}^n R_{n-j}^n \le \frac{1}{\lambda}[1- E_{\alpha}(-\nu^{-1}\lambda (t_n-t_{m-1})^{\alpha})].
    \end{equation}
\end{lemma}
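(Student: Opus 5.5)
The plan is to read both bounds off the representation \eqref{eq:discreterepresentation} combined with the comparison principle, Theorem~\ref{thm:implicitcp}(1), using as barriers the dissipative Mittag--Leffler bounds of \cite{paper} for constant forcing. Those bounds apply because of the two-sided estimate $\nu\ell_{n,j}\le c_{n-j}^n\le\rho\ell_{n,j}$.

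\emph{Upper bound for $1-\sum_j R_{n-j}^n$.} Take $f_n\equiv0$ and $u_0=1$. By \eqref{eq:discreterepresentation}, the exact solution of $\cD_\tau^\alpha u_n=-\lambda u_n$ is $u_n=1-\sum_{j=1}^n R_{n-j}^n$. Nonnegativity is immediate from complete positivity, since $\sum_j R_{n-j}^n\le1$ (equivalently, $B\mathbf 1\ge0$ gives it by comparison with the zero solution).

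For the upper bound, I would check that $w_n:=E_\alpha(-\rho^{-1}\lambda t_n^\alpha)$ is a discrete supersolution, i.e.\ $\cD_\tau^\alpha w_n+\lambda w_n\ge0$. The function $w$ is decreasing and convex in the sense relevant to L1-type weights, so $\nabla_\tau w_j\le0$. Hence
\begin{equation*}
\sum_j c_{n-j}^n\nabla_\tau w_j\ge\rho\sum_j\ell_{n,j}\nabla_\tau w_j .
\end{equation*}
The L1 sum of a decreasing function $e(t)=E_\alpha(-\mu t^\alpha)$ dominates the Caputo derivative, by the known result in \cite{paper} (this is exactly where that paper's dissipative estimate is used). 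With $\mu=\rho^{-1}\lambda$, this gives
\begin{equation*}
\rho\sum_j\ell_{n,j}\nabla_\tau w_j\ge\rho D_0^\alpha w(t_n)=-\lambda w_n .
\end{equation*}
Theorem~\ref{thm:implicitcp}(1) with $F(t,u)=-\lambda u$, which is nonincreasing, then yields $u_n\le w_n$. In the write-up I would simply cite the corresponding theorem of \cite{paper} with $v_0=1$, $f=0$.

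\emph{Bound for the partial sums.} Fix $m$ and take $u_0=0$, $f_n=\mathbf 1_{\{n\ge m\}}$. Then \eqref{eq:discreterepresentation} gives $u_n=\lambda^{-1}\sum_{j=m}^n R_{n-j}^n$. Since $u_j=0$ for $j<m$, only differences with $j\ge m$ enter $\cD_\tau^\alpha u_n$, so the problem is effectively a problem started at $t_{m-1}$ with zero data and unit forcing.

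The candidate supersolution is
\begin{equation*}
\lambda^{-1}\bigl[1-E_\alpha(-\nu^{-1}\lambda (t_n-t_{m-1})^\alpha)\bigr],
\end{equation*}
shifted so that it vanishes at $t_{m-1}$. This function is increasing, so $\nabla_\tau\ge0$, and now the \emph{lower} weight bound is the relevant one:
\begin{equation*}
\sum c\nabla\ge\nu\sum\ell\nabla\ge\nu D^\alpha=1-\lambda\,(\text{function}).
\end{equation*}
Here the middle inequality is the L1-versus-Caputo comparison for this monotone function, again taken from \cite{paper}. Comparison then gives the claim.

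\emph{Main obstacle.} The delicate step is the inequality between the L1 sum and the continuous Caputo derivative evaluated at the grid points, with the correct direction for each monotone barrier. In particular, the shift to $t_{m-1}$ requires checking that the weights $c_{n-j}^n$ for $j\ge m$ still satisfy the same two-sided bounds relative to the L1 weights of the shifted mesh. They do, because $\ell_{n,j}$ depends only on $t_n-t_{j-1}$ and $t_n-t_j$, which are invariant under the shift. I expect to defer to the precise statements in \cite{paper} rather than reprove them.
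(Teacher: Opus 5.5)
\paragraph{Review.} Your reduction is the same as the paper's. The paper proves this lemma only by invoking the results of \cite{paper} with $v_0=1$, $f_n=0$ and with $v_0=0$, $f_n=\mathbf 1_{\{n\ge m\}}$, read through \eqref{eq:discreterepresentation}. Your argument for the second bound is also sound: $1-E_\alpha(-\mu t^\alpha)$ is increasing and concave, so the L1 sum does dominate its Caputo derivative, and $c_{n-j}^n\ge\nu\ell_{n,j}$ is the right bound to use. Your shift invariance of $\ell_{n,j}$ is correct too.

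\paragraph{The gap in the first bound.} The step you actually argue for the first bound is false. The function $w(t)=E_\alpha(-\mu t^\alpha)$ is completely monotone, hence convex. Its piecewise linear interpolant therefore lies \emph{above} it, so the L1 sum lies \emph{below} the Caputo derivative, not above it. This is exactly the paper's own identity \eqref{eq:WmnL1exp}, with the error term $E_m\ge0$ for a convex function.

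The failure is visible already at $n=1$ for small $\mu\tau^\alpha$:
\[
\ell_{1,1}(w_1-1)\approx-\frac{\mu}{\Gamma(1+\alpha)\Gamma(2-\alpha)}<-\mu\approx D_0^\alpha w(t_1),
\]
because $\Gamma(1+\alpha)\Gamma(2-\alpha)=2B(1+\alpha,2-\alpha)<1$. Even with the paper's $\rho=(1+M_{\mathrm{ker}})/(1-\alpha)$ and $\mu=\lambda/\rho$, your intermediate claim $\rho\sum_j\ell_{1,j}\nabla_\tau w_j\ge-\lambda w_1$ fails at $n=1$. So the chain ``$c\le\rho\ell$, then L1 $\ge$ Caputo'' does not produce a supersolution.

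\paragraph{The fix.} Compare with the left-endpoint kernel values instead of $\ell_{n,j}$.

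\begin{enumerate}[(1)]
\item Concavity of $x^{1-\alpha}$ gives $\ell_{n,j}\le(1-\alpha)^{-1}g_{1-\alpha}(t_n-t_{j-1})$. Hence
\[
c_{n-j}^n\le(1+M_{\mathrm{ker}})\ell_{n,j}\le\rho\,g_{1-\alpha}(t_n-t_{j-1}).
\]
Note that you should use $c\le(1+M_{\mathrm{ker}})\ell$ here, not $c\le\rho\ell$.
\item Let $w$ be any decreasing function. On $[t_{j-1},t_j]$ one has $g_{1-\alpha}(t_n-t_{j-1})\le g_{1-\alpha}(t_n-s)$, and $w'\le0$. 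Therefore
\[
\sum_j g_{1-\alpha}(t_n-t_{j-1})\nabla_\tau w_j\ge\int_0^{t_n}g_{1-\alpha}(t_n-s)w'(s)\,ds=D_0^\alpha w(t_n).
\]
No convexity is needed for this step.
\item Combining (1) and (2) with $\nabla_\tau w_j\le0$ gives
\[
\cD_\tau^\alpha w_n\ge\rho D_0^\alpha w(t_n)=-\lambda w_n\quad\text{for }\mu=\lambda/\rho.
\]
\end{enumerate}

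This is precisely why $\rho$ carries the factor $(1-\alpha)^{-1}$, which your version leaves unexplained. If your write-up only cites \cite{paper}, it is fine. The sketch as stated, however, should not be offered as the reason the first bound holds.
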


Combining conditions~\ref{cond1} and~\ref{cond3}
of Assumption~\ref{ass:proofcond}
with \eqref{eq:b_and_c}, we obtain
\begin{equation}\label{eq:cmono}
    0<c_{n-1}^n\le c_{n-2}^n\le\cdots\le c_0^n,
    \qquad n\ge 1.
\end{equation}
\begin{lemma}\label{lem:Runifbound}
Let $\lambda>0$. Suppose that, for all $1\le n\le N$, \eqref{eq:cmono} holds.
If $w_0\ge0$ and
\begin{equation}\label{eq:wseq}
    \cD_\tau^\alpha w_n+\lambda w_n\ge0,
    \qquad n\ge 1,
\end{equation}
then $w_n\ge0$. Moreover, the resolvent kernel satisfies
\begin{equation}\label{eq:RInitBound}
    0\le R_{n-m}^n\le\beta_0,
    \qquad 1\le m\le n,
\end{equation}
where $\beta_0=R_0^m=\lambda/(c_0^m+\lambda)$.
\end{lemma}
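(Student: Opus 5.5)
The statement to prove is Lemma~\ref{lem:Runifbound}: a discrete maximum principle, followed by its application to each column of the resolvent. I would work directly from the monotonicity \eqref{eq:cmono} and a minimum-index argument.

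\emph{Step 1: positivity of $w$.} Suppose the claim fails. Let $n\ge1$ be the first index with $w_n<0$, so that $w_j\ge0$ for $j<n$. Summation by parts in \eqref{eq:DiscreteScheme} gives
\begin{equation*}
\cD_\tau^\alpha w_n = c_0^n w_n-\sum_{j=1}^{n-1}\bigl(c_{n-j-1}^n-c_{n-j}^n\bigr)w_j - c_{n-1}^n w_0 .
\end{equation*}
By \eqref{eq:cmono}, every coefficient $c_{n-j-1}^n-c_{n-j}^n$ and $c_{n-1}^n$ is nonnegative. Since $w_j\ge0$ for $0\le j<n$, we get $\cD_\tau^\alpha w_n\le c_0^n w_n$. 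Hence
\begin{equation*}
0\le \cD_\tau^\alpha w_n+\lambda w_n\le (c_0^n+\lambda)w_n<0,
\end{equation*}
a contradiction. (Alternatively, one may invoke Theorem~\ref{thm:implicitcp}, since \eqref{eq:cmono} yields \eqref{eq:Bcondition}; the direct argument is cleaner.)

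\emph{Step 2: lower bound $R_{n-m}^n\ge0$.} Fix $m$ and set $w_j=R_{j-m}^j$ for $j\ge m$, with $w_j=0$ for $j<m$. By \eqref{eq:Ridentity}, $\cD_\tau^\alpha w_n+\lambda w_n=\lambda\delta_{n,m}\ge0$ for all $n\ge1$, including $n<m$, where both sides vanish. Step 1 then gives $w_n\ge0$.

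\emph{Step 3: upper bound $R_{n-m}^n\le\beta_0$.} The value $R_0^m=\beta_0$ comes from \eqref{eq:beta0def}. Take $z_j=\beta_0\mathbf 1_{\{j\ge m\}}$ and compute
\begin{equation*}
\cD_\tau^\alpha z_n=\beta_0 c_{n-m}^n\quad (n\ge m),\qquad \cD_\tau^\alpha z_n=0\quad (n<m).
\end{equation*}
Now let $y=z-w$. For $n<m$, $y=0$. For $n=m$, the identity $\beta_0(c_0^m+\lambda)=\lambda$ gives $\cD_\tau^\alpha y_m+\lambda y_m=0$. For $n>m$,
\begin{equation*}
\cD_\tau^\alpha y_n+\lambda y_n=\beta_0 c_{n-m}^n+\lambda\beta_0-0\ge0 .
\end{equation*}
Step 1 applied to $y$, with $y_0=0$, gives $w_n\le\beta_0$.

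The main subtlety is checking that $\delta$-forcing and the zero-extension fit the form \eqref{eq:wseq} at the boundary index $n=m$; with the identity $\beta_0(c_0^m+\lambda)=\lambda$ this is immediate.
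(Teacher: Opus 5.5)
Your proof is correct and follows the paper's argument. Positivity comes from the same summation-by-parts expansion, whose coefficients are nonnegative by \eqref{eq:cmono}; your minimal-counterexample argument is just the paper's induction. For the upper bound, the paper inducts directly and uses the telescoping $\sum_{j=m}^{n-1}(c_{n-j-1}^n-c_{n-j}^n)=c_0^n-c_{n-m}^n\le c_0^n+\lambda$, whereas you compare the column with the barrier $\beta_0\mathbf 1_{\{j\ge m\}}$ through Step~1, which is the same computation repackaged as a maximum principle.
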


\begin{proof}
We first prove the comparison principle.
Expanding \eqref{eq:wseq} using \eqref{eq:DiscreteScheme}, we obtain
\begin{equation}\label{d:eq:expand}
    0
    \le \cD_\tau^\alpha w_n+\lambda w_n
    =(c_0^n+\lambda)w_n
    -\sum_{j=1}^{n-1}
    (c_{n-j-1}^n-c_{n-j}^n)w_j
    -c_{n-1}^n w_0.
\end{equation}
Induction from $w_0\ge0$
gives $w_n\ge0$ for all $1\le n\le N$.

For the resolvent bound, fix $1\le m\le N$.
At $n=m$, we have
$R_0^m=\beta_0>0$.
Let $n>m$ and suppose that
$0\le R_{j-m}^j\le\beta_0$ for all $m\le j<n$.
Expanding \eqref{eq:bRidentity} and using the weight monotonicity,
we obtain
\begin{equation}\label{d:eq:capinduction}
    \begin{aligned}
        0
        &\le(c_0^n+\lambda)R_{n-m}^n
        =\sum_{j=m}^{n-1}
        (c_{n-j-1}^n-c_{n-j}^n)R_{j-m}^j\\
        &\le\beta_0\sum_{j=m}^{n-1}
        (c_{n-j-1}^n-c_{n-j}^n)
        =\beta_0(c_0^n-c_{n-m}^n)
        \le\beta_0(c_0^n+\lambda).
    \end{aligned}
\end{equation}
Hence, $0\le R_{n-m}^n\le\beta_0$. This completes the induction.
\end{proof}

The estimate above are not very useful if one has varying forcing terms. In some applications, $f_n$ may decay with $n$.
To recover the accurate decay estimate (as in section \ref{sec:linear-subdiffusion}), one has to obtain the entrywise bound for $(R_{\lambda})_{n-m}^n$, and ideally by $\bar{u}_m(t_n)$.

\subsection{The supersolution and Gr\"onwall inequality}

In this subsection, we present the construction of the supersolution so that Theorem~\ref{thm:mainest} holds.

Consider the solution to the following fractional equation
\begin{equation}\label{eq:contKernelequation}
    (D_{\mathrm{RL}}^\alpha+\lambda)\phi_{\lambda,\kappa}(t)
    =-\kappa \phi_{\lambda,\kappa}'(t), \quad \phi_{\lambda, \kappa}(0)=\lambda/\kappa.
\end{equation}
where
$D_{\mathrm{RL}}^\alpha \phi_{\lambda,\kappa}
=(I^{1-\alpha}\phi_{\lambda,\kappa})'$
denotes the Riemann--Liouville derivative. 
\begin{remark}
The nonnegative residual $-\kappa\phi_{\lambda,\kappa}'$
in \eqref{eq:contKernelequation}
will be used to control the discretization errors.
\end{remark}

Recall $\tau=\sup_j\tau_j$. Define
\begin{equation}\label{eq:kappa}
    \kappa_m:=D_*\tau^{1-\alpha},
\end{equation}
where $D_*\ge1$ and $L_*\ge4$ will be chosen in Subsection~\ref{subsec:step4} so that
\begin{equation}\label{eq:D*L*constraint}
    D_*L_*^{-(1-\alpha)}\le1.
\end{equation}
We remark that $\kappa_m$ is a small parameter that tends to zero as $\tau\to 0$.
Let $\eta_m$ be the smallest shifted grid point such that 
\begin{gather}\label{eq:etam}
L_*\tau \le \eta_m=t_{j_*}-t_{m-1}<(L_*+1)\tau.
\end{gather}
\begin{remark}
If only finitely many grid points are considered (i.e., $N<\infty$), one may extend the grid by introducing some ghost points with fixed step (for example $\tau_N$) to define $\eta_m$ when $m$ is close to $N$. Note that this will not affect the analysis for the focused grid points.
\end{remark}

Write $\phi_m:=\phi_{\lambda,\kappa_m}$ and define
\begin{equation}\label{eq:WmZmDef}
    \mathcal W_m(t):=
    \begin{cases}
        \dfrac{t}{\eta_m}\phi_m(\eta_m),
        &0\le t\le\eta_m,\\[1mm]
        \phi_m(t),&t>\eta_m.
    \end{cases}
\end{equation}
Introduce the shifted nodes
\begin{equation}
    x_j:=t_j-t_{m-1},
    \qquad j=m-1,\ldots,N.
\end{equation}
Define
\begin{equation}\label{eq:Wmjdef}
    \mathcal W_m^j:=
    \begin{cases}
        0,&j<m,\\
        \mathcal W_m(x_j),&j\ge m.
    \end{cases}
\end{equation}
and
\begin{equation}\label{eq:V}
    \mathcal V_m^n
    :=\frac{2\tau_m}{\gamma_*}\mathcal W_m^n,
\end{equation}
where
\begin{equation}\label{eq:sourceconstant}
    \gamma_*:=
    c_\alpha\bigl((L_*+1)K\bigr)^{-\alpha-2}
    \min\left\{
        \frac{\nu_{\mathrm{ker}}}{\Gamma(2-\alpha)},1
    \right\}>0.
\end{equation}
Here, $c_{\alpha}>0$ is a constant depending on $\alpha$ only such that
\[
\phi_{\lambda, \kappa}(t)\ge c_{\alpha}\frac{\lambda t^{\alpha-1}}{(1+\lambda t^{\alpha})^2},
\quad t^{1-\alpha}\ge \kappa,
\]
as shown in Lemma \ref{lem:kernelcont}.

We will establish the following estimate, whose proof is deferred to Section~\ref{sec:proof-supersolution}.
\begin{lemma}\label{lem:V}
Suppose that Assumption~\ref{ass:proofcond} holds.
For each fixed $1\le m\le N$, let $\lambda>0$ satisfy
$\lambda\tau_m^\alpha\le1$.
Then the sequence
$\{\mathcal V_m^n\}_{n=0}^N$, with
$\mathcal V_m^n=0$ for $0\le n<m$ satisfies
\begin{equation}\label{eq:DVest}
        \cD_\tau^\alpha\mathcal V_m^n
        +\lambda\mathcal V_m^n
        \ge\lambda\delta_{n,m},
        \qquad m\le n\le N,
\end{equation}
\begin{equation}\label{eq:Vbound}
        0\le\mathcal V_m^n
        \le C_{\mathrm{bar}}\bar{u}_m(t_n),
        \qquad m\le n\le N.
\end{equation}
Here, $C_{\mathrm{bar}}$ depends only on
$\alpha$, $K$, $\nu_{\mathrm{ker}}$, and $M_{\mathrm{ker}}$.
\end{lemma}

We now take the lemma to derive the main result.
\begin{proof}[Proof of Theorem~\ref{thm:mainest}]
The comparison principle in Lemma~\ref{lem:Runifbound} then gives
\begin{equation}
    R_{n-m}^n
    \le\mathcal V_m^n
    \le C_{\mathrm{bar}}\bar{u}_m(t_n),
    \qquad m\le n.
\end{equation}

\end{proof}

Consider again the discrete inequality \eqref{eq:intro-discrete}.
The initial value $v_0$ and the forcing terms $f_n$ may be arbitrary real numbers. Write $x^+:=\max\{x,0\}$.

As a consequence of the above result, combining complete positivity and \eqref{eq:unvncomp}
with Theorem~\ref{thm:mainest} and Lemma~\ref{eq:sumR}, one has the following.
\begin{theorem}
Suppose that Assumption~\ref{ass:proofcond} holds
and that the time steps satisfy \eqref{eq:stepsizeconstraint}.
The sequence in \eqref{eq:intro-discrete} then satisfies for $1\le n\le N$ that
\begin{equation}\label{eq:discrete-gronwall}
    \begin{aligned}
        v_n
        &\le
        v_0^+\left(1-\sum_{m=1}^nR_{n-m}^n\right)
        +\frac1\lambda\sum_{m=1}^nR_{n-m}^n f_m^+\\
        &\le
        v_0^+E_\alpha(-\rho^{-1}\lambda t_n^\alpha)
        +
        \frac A\lambda\sum_{m=1}^n f_m^+
        \Bigl(
            E_\alpha\bigl(-\lambda(t_n-t_m)^\alpha\bigr)
            -
            E_\alpha\bigl(-\lambda(t_n-t_{m-1})^\alpha\bigr)
        \Bigr).
    \end{aligned}
\end{equation}
Here, $A$ is the constant in Theorem~\ref{thm:mainest},
and $\rho$ is the constant from
Lemma~\ref{eq:sumR}.
\end{theorem}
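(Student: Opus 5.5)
The plan is to bound $v_n$ by the comparison sequence and then use the two resolvent estimates already available. First, write the comparison sequence $u_n$ with $\cD_\tau^\alpha u_n=-\lambda u_n+f_n^+$ and $u_0=v_0^+$. Since $f_n\le f_n^+$ and $v_0\le v_0^+$, $v_n$ is a subsolution of this equation. The right-hand side $F(t,u)=-\lambda u+f_n^+$ is nonincreasing in $u$. Complete positivity (condition~\ref{cond1}) therefore lets us apply Theorem~\ref{thm:implicitcp}(1), which gives $v_n\le u_n$. The representation \eqref{eq:discreterepresentation} then yields
\[
v_n\le v_0^+\Bigl(1-\sum_{m=1}^nR_{n-m}^n\Bigr)+\frac1\lambda\sum_{m=1}^nR_{n-m}^n f_m^+ ,
\]
which is the first inequality in \eqref{eq:discrete-gronwall}. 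Equivalently, this is \eqref{eq:unvncomp} applied to the modified data. Both coefficients here are nonnegative: the first by Lemma~\ref{eq:sumR}, the second by Theorem~\ref{thm:mainest}.

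Second, bound each term.
\begin{itemize}
\item For the homogeneous part, Lemma~\ref{eq:sumR} gives $0\le 1-\sum_{m}R_{n-m}^n\le E_\alpha(-\rho^{-1}\lambda t_n^\alpha)$. Because $v_0^+\ge0$, we may multiply through by $v_0^+$.
\item For the forcing part, Theorem~\ref{thm:mainest} gives $0\le R_{n-m}^n\le A\bar u_m(t_n)$ under \eqref{eq:stepsizeconstraint}. We multiply by $f_m^+\ge0$ and sum over $m$.
\end{itemize}

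Finally, we identify $\bar u_m(t_n)$ explicitly using \eqref{eq:continuous-response-kernel} and $r_\lambda=-\frac{d}{dt}E_\alpha(-\lambda t^\alpha)$:
\[
\bar u_m(t_n)=\int_{t_n-t_m}^{t_n-t_{m-1}}r_\lambda(s)\,ds=E_\alpha(-\lambda(t_n-t_m)^\alpha)-E_\alpha(-\lambda(t_n-t_{m-1})^\alpha).
\]
For $m=n$ this uses $E_\alpha(0)=1$, consistent with the case $t\le t_m$ in the definition of $\bar u_m$. The substantive difficulty lies entirely in Theorem~\ref{thm:mainest}, that is, in Lemma~\ref{lem:V}, which is taken as given.

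The only point requiring care is the positivity bookkeeping. Replacing $v_0$ and $f_m$ by their positive parts is exactly what allows the upper bounds on the nonnegative coefficients to be applied termwise without sign issues.
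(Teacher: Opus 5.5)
Your proposal is correct and follows the paper's argument. You use the comparison principle with the resolvent representation \eqref{eq:discreterepresentation}, then Lemma~\ref{eq:sumR} for the homogeneous part, and Theorem~\ref{thm:mainest} with the explicit evaluation of $\bar u_m(t_n)$ for the forcing part. The only difference is cosmetic: you put $v_0^+$ and $f_n^+$ into the comparison problem itself, while the paper's one-line proof passes to positive parts through the nonnegativity of $1-\sum_m R_{n-m}^n$ and $R_{n-m}^n$; the two are equivalent.
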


\section{Proof of the supersolution property}
\label{sec:proof-supersolution}

In this section, we give the proof of Lemma~\ref{lem:V}.
We first present some properties of $\phi_{\lambda,\kappa}$
and the associated cumulative kernel, defined by
\begin{equation}
    Q_{\lambda,\kappa}(t)
    :=\int_0^t \phi_{\lambda,\kappa}(s)\,\dd s,
\end{equation}
We denote the Laplace transform of $g$ by $\widehat g$.
We have the following result about the behaviors of the functions and the proofs are provided in section \ref{app:phi}.
\begin{lemma}\label{lem:kernelcont}
Let $\lambda,\kappa>0$. The Laplace transform of $\phi_{\lambda,\kappa}$ is given by
\begin{equation}\label{eq:phiLap}
    \widehat \phi_{\lambda,\kappa}(z)
    =\frac{\lambda}{z^\alpha+\kappa z+\lambda},
    \qquad z>0.
\end{equation}
Then $\phi_{\lambda,\kappa}$ is positive, decreasing, and convex
on $(0,\infty)$, and satisfies
\begin{equation}\label{eq:phiconc1}
    \int_0^\infty \phi_{\lambda,\kappa}(t)\,\dd t=1.
\end{equation}
For every $t$ with $t^{1-\alpha}\ge \kappa$,
we have
\begin{equation}\label{eq:phiasymp}
    (-1)^p \phi_{\lambda,\kappa}^{(p)}(t)
    \asymp_\alpha
    \frac{\lambda t^{\alpha-1-p}}
         {(1+\lambda t^\alpha)^2},
    \qquad p=0,1,2.
\end{equation}
The kernel $Q_{\lambda,\kappa}$ satisfies
\begin{equation}\label{eq:Qasym}
    Q_{\lambda,\kappa}(t)
    \asymp_\alpha
    \frac{\lambda t^\alpha}{1+\lambda t^\alpha}
\end{equation}
under the same condition $t^{1-\alpha}\ge \kappa $.
The constants in \eqref{eq:phiasymp} and \eqref{eq:Qasym}
are independent of $\lambda$, $\kappa$, and $t$. 
\end{lemma}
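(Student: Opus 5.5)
The plan is to work entirely through the Laplace transform and the structure of $\widehat\phi_{\lambda,\kappa}(z)=\lambda/(z^\alpha+\kappa z+\lambda)$.

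\textbf{Step 1: the transform.} Taking Laplace transforms in \eqref{eq:contKernelequation}, and using $\widehat{D_{\mathrm{RL}}^\alpha\phi}=z^\alpha\widehat\phi-(I^{1-\alpha}\phi)(0^+)$ together with $\widehat{\phi'}=z\widehat\phi-\phi(0)$, gives
\[
(z^\alpha+\lambda+\kappa z)\widehat\phi=\kappa\phi(0)=\lambda ,
\]
since $(I^{1-\alpha}\phi)(0^+)=0$ for bounded $\phi$. This is \eqref{eq:phiLap}. Evaluating at $z=0$ then yields \eqref{eq:phiconc1}.

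\textbf{Step 2: positivity, monotonicity and convexity.} Write $h(z)=z^\alpha+\kappa z+\lambda$. Each of its three terms is a Bernstein function, so $h$ is a (complete) Bernstein function. Hence $1/h$ is completely monotone as a function of $z$, and in fact a Stieltjes function. I would prove complete monotonicity of $\phi$ in $t$ by Laplace inversion, deforming the Bromwich contour onto the negative real axis. The only singularity is the branch cut there: $h$ has no zeros off the negative axis, because for $|\arg z|<\pi$ the imaginary parts of $z^\alpha$ and $\kappa z$ have the same sign. This gives
\[
\phi(t)=\frac{\lambda}{\pi}\int_0^\infty e^{-rt}\,\frac{r^\alpha\sin(\alpha\pi)}{|h(re^{i\pi})|^2}\,dr ,
\]
which is visibly completely monotone, so $\phi$ is positive, decreasing and convex. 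The value $\phi(0)=\lambda/\kappa$ is consistent with $\lim_{z\to\infty} z\widehat\phi(z)=\lambda/\kappa$.

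\textbf{Step 3: the two-sided bounds (main obstacle).} I expect \eqref{eq:phiasymp} to be the hardest part. My plan is to estimate the spectral density
\[
\sigma(r)=\frac{\lambda r^\alpha\sin(\alpha\pi)/\pi}{(r^\alpha\cos\alpha\pi-\kappa r+\lambda)^2+r^{2\alpha}\sin^2\alpha\pi}.
\]
First show $|h(-r)|\asymp_\alpha r^\alpha+\kappa r+\lambda$. Then observe that $e^{-rt}$ effectively localizes to $r\lesssim 1/t$, and that on this range $t^{1-\alpha}\ge\kappa$ gives $\kappa r\le r^\alpha$. So for $r\le C/t$ we get $\sigma(r)\asymp \lambda r^\alpha/(r^\alpha+\lambda)^2$, the same density as for $r_\lambda$ (the case $\kappa=0$).

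The two sides are then handled separately. For the lower bound, integrate only over $r\in[1/(2t),1/t]$ and compare with $\lambda t^{\alpha-1-p}/(1+\lambda t^\alpha)^2$, treating the regimes $\lambda t^\alpha\le1$ and $\lambda t^\alpha\ge1$. For the upper bound, split into $r\le 1/t$, where the comparison above applies, and $r>1/t$, where I use $\sigma(r)\lesssim \lambda r^{\alpha}/(r^\alpha+\kappa r+\lambda)^2$ together with the factor $e^{-rt}$.

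The delicate point is the upper bound in the regime $\lambda t^\alpha\gg1$, where the target size is $\lambda^{-1}t^{-\alpha-1-p}$. On $r>1/t$ I need $\sigma(r)\lesssim r^\alpha/\lambda$, which holds whenever $r^\alpha+\kappa r\lesssim\lambda$. Where instead $r^\alpha$ or $\kappa r$ exceeds $\lambda$, the density is already small and $e^{-rt}$ kills it. Throughout I would check that the constants depend only on $\alpha$; the hypothesis $t^{1-\alpha}\ge\kappa$ is exactly what keeps $\kappa$ out of them. An alternative for $p=0$ is to compare directly with $r_\lambda$ via subordination, but the spectral route handles $p=1,2$ uniformly, since each derivative simply brings down a factor $r^p\asymp t^{-p}$ on the dominant range.

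\textbf{Step 4: the cumulative kernel.} For $Q$, write $Q(t)=\int_0^\infty(1-e^{-rt})\sigma(r)\,dr/r$ and use the same splitting. Alternatively, note that $\widehat Q=\widehat\phi/z$, so $1-Q(t)$ has density $\sigma(r)/r$ against $e^{-rt}$. Both routes give $Q\asymp \lambda t^\alpha/(1+\lambda t^\alpha)$ under $t^{1-\alpha}\ge\kappa$, which is \eqref{eq:Qasym}.
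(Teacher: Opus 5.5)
Your Steps 1--2 and the lower half of Step 3 are sound and follow the paper's route: the same spectral density, and a window $rt\in[\tfrac12,1]$ where $\kappa r\le r^\alpha$, so the lower bound needs only $|h(-r)|\le\lambda+2r^\alpha$.

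\textbf{The gap.} Your upper bound on $r>1/t$ rests on the claim $|h(-r)|\asymp_\alpha r^\alpha+\kappa r+\lambda$, and this claim is false. The real part $\lambda-\kappa r+r^\alpha\cos(\pi\alpha)$ changes sign at some $r_0$, and there $|h(-r_0)|=r_0^\alpha\sin(\pi\alpha)$. Take $\alpha=\tfrac12$. Then $|h(-r)|^2=(\lambda-\kappa r)^2+r$, which equals $\lambda/\kappa$ at $r_0=\lambda/\kappa$. On the other hand $(r_0^{1/2}+\kappa r_0+\lambda)^2\ge4\lambda^2$, so the squared ratio is at most $1/(4\lambda\kappa)$. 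No $\alpha$-dependent constant works: take $\kappa=1$, $t\ge1$, $\lambda\to\infty$, all admissible.

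At $r_0$ the density is $\sigma(r_0)=\lambda r_0^{-1/2}/\pi$. This exceeds the bound $r^\alpha/\lambda$ that you say you need by the factor $\lambda\kappa/\pi$. Moreover $r_0t=\lambda t/\kappa\ge\lambda t^\alpha$, so this resonance lies inside your region $r>1/t$ whenever $\lambda t^\alpha>1$. Hence, in the delicate regime, both halves of your dichotomy fail near $\kappa r\approx\lambda$. There $\sigma(r)\lesssim r^\alpha/\lambda$ does not hold, and ``the density is already small'' is false.

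\textbf{The fix.} What saves the estimate is the exponential factor, not smallness of the density. The correct unconditional bound is
\[
|h(-r)|^2\ge(1-|\cos\pi\alpha|)\bigl((\lambda-\kappa r)^2+r^{2\alpha}\bigr).
\]
Use $\kappa\le t^{1-\alpha}$ and split at $rt=\lambda t^\alpha/2$.
\begin{itemize}
\item On $rt\le\lambda t^\alpha/2$ you get $\kappa r\le\lambda/2$. Hence $|h(-r)|\gtrsim_\alpha\lambda$ and $\sigma(r)\lesssim_\alpha r^\alpha/\lambda$, which yields the target size $\lambda^{-1}t^{-\alpha-1-p}$.
\item On $rt\ge\lambda t^\alpha/2$ you only have $\sigma(r)\lesssim_\alpha\lambda r^{-\alpha}$. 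The contribution is then at most $C_\alpha\lambda t^{\alpha-1-p}\int_{\lambda t^\alpha/2}^\infty e^{-y}y^{p-\alpha}\,dy$, which is exponentially small in $\lambda t^\alpha$.
\end{itemize}
So the cut must be placed at $rt\asymp\lambda t^\alpha$, not only at $rt=1$. This is exactly the paper's argument. It rescales $y=rt$, uses $\kappa t^{\alpha-1}\le1$ and $\theta=\lambda t^\alpha$, and bounds the dimensionless integral $I_p$ by splitting at $y=\theta/2$.

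\textbf{On $Q$.} The paper avoids density estimates altogether. It combines $(1-e^{-rt})/r\asymp(r+t^{-1})^{-1}$ with the Stieltjes representation to get, in one line,
\[
Q(t)\asymp\widehat\phi_{\lambda,\kappa}(1/t)=\frac{\lambda t^\alpha}{1+\kappa t^{\alpha-1}+\lambda t^\alpha}.
\]
By contrast, your window $rt\in[\tfrac12,1]$ alone only gives the lower bound $\lambda t^\alpha/(1+\lambda t^\alpha)^2$. That is too weak when $\lambda t^\alpha\gg1$, so you would also need the mass of $\sigma(r)/r$ on $r\gtrsim1/t$.
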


\subsection{Continuous residual}\label{subsec:step2}

To start the verification of the supersolution property, we start with applying the continuous operator on $\mathcal W_m$.
Note that $\mathcal W_m$ is locally absolutely continuous
with $\mathcal W_m(0)=0$, so its Caputo and
Riemann--Liouville derivatives coincide. Using \eqref{eq:contKernelequation}, we obtain for $t>\eta_m$ that
\begin{equation}\label{eq:continuousres}
    \begin{aligned}
        (D_{\mathrm C}^\alpha+\lambda)\mathcal W_m(t)
        &=(D_{\mathrm{RL}}^\alpha+\lambda)\phi_m(t)
          -D_{\mathrm{RL}}^\alpha\mathcal Z_m(t)\\
        &=\mathcal H_m(t)-\kappa_m\phi_m'(t)>0,
        \qquad t>\eta_m.
    \end{aligned}
\end{equation}
Here, 
\begin{equation}\label{eq:HmDef}
    \mathcal H_m(t)
    :=-D_{\mathrm{RL}}^\alpha\mathcal Z_m(t)
    =\frac{\alpha}{\Gamma(1-\alpha)}
      \int_0^{\eta_m}
      \mathcal Z_m(s)(t-s)^{-1-\alpha}\,\dd s
    >0,
\end{equation}
with $Z_m(t):=\phi_m(t)-\mathcal W_m(t)$.
Since $\phi_m$ is decreasing, $\mathcal Z_m\ge0$,
with strict positivity on $(0,\eta_m)$,
and $\mathcal Z_m=0$ on $[\eta_m,\infty)$.

By construction and the mesh condition indicated in \eqref{eq:etam},
\begin{equation}\label{eq:etamIneq}
    1\le\frac{\eta_m}{\tau_m}\le(L_*+1)K.
\end{equation}
Hence, when $t\ge \eta_m$, one has
\begin{equation}
    \kappa_m t^{\alpha-1} =D_*\left(\frac{\tau}{t}\right)^{1-\alpha}  \le D_*L_*^{-(1-\alpha)}   \le1.
\end{equation}
Moreover, since $Q_{\lambda,\kappa_m}(\eta_m)\ge\eta_m\phi_m(\eta_m)$,
\begin{equation}\label{eq:ZmItegral}
    \int_0^{\eta_m}\mathcal Z_m(s)\,\dd s
    =Q_{\lambda,\kappa_m}(\eta_m)
     -\frac12\eta_m\phi_m(\eta_m)
    \ge\frac12Q_{\lambda,\kappa_m}(\eta_m).
\end{equation}
Combining this with \eqref{eq:HmDef},
\eqref{eq:Qasym}, and
$(t-s)^{-1-\alpha}\ge t^{-1-\alpha}$ yields
\begin{equation}\label{eq:Hmlower}
    \mathcal H_m(t)
    \ge c_\alpha
    Q_{\lambda,\kappa_m}(\eta_m)t^{-1-\alpha}
    \ge c_\alpha
    \frac{\lambda\eta_m^\alpha}
         {1+\lambda\eta_m^\alpha}
    t^{-1-\alpha},
    \qquad t>\eta_m.
\end{equation}
We also have
\begin{equation}\label{eq:Hmlower2}
    \mathcal H_m(t)
    \ge c_\alpha\eta_m\phi_m(\eta_m)t^{-1-\alpha},
    \qquad t>\eta_m.
\end{equation}

Next, we will move onto the discrete case. Using the continuous computation here, we aim to bound the discrete
derivative by $\mathcal H_m$ below.
Set the difference of the $c_{n_j}^n$ coefficient between the reference $L1$ scheme and the focused scheme
\[
d_{n,j}:=\ell_{n,j}-c_{n-j}^n.
\]
We then decompose the operator into the following
\begin{equation}\label{eq:Wmnfrac}
    (\mathcal D_\tau^\alpha\mathcal W_m)^n
    =(\mathcal D_{\mathrm{L1},\tau}^\alpha\mathcal W_m)^n
    -\sum_{j=m}^n d_{n,j}
    (\mathcal W_m^j-\mathcal W_m^{j-1}).
\end{equation}
Note that condition~\ref{cond3} of Assumption~\ref{ass:proofcond}
and \eqref{eq:Gnj-comparison} give
\begin{equation}\label{eq:weight-deviation-bound}
    |d_{n,j}|
    \le M_{\mathrm{ker}}
        \frac{\tau_j}{t_n-t_{j-1}}\ell_{n,j}
    \le\frac{2M_{\mathrm{ker}}}{\alpha}G_{n,j}.
\end{equation}
Below, we first focus on the $L1$ discretization in subsection \ref{subsec:L1-error} and then move to the 
scheme in subsection \ref{subsec:step4}.

\subsection{The reference L1 discretization error estimate}
\label{subsec:L1-error}

Let $\mathcal I_m\mathcal W_m$ denote the piecewise
linear interpolant on the shifted grid, and set
\begin{equation}\label{eq:e}
    \mathfrak e_m(s)
    :=(\mathcal I_m\mathcal W_m)(s)-\mathcal W_m(s).
\end{equation}
Since $\eta_m$ is a grid node,
$\mathfrak e_m=0$ on $[0,\eta_m]$ and
$\mathfrak e_m\ge0$ on the tail by convexity.
Moreover, $\mathfrak e_m$ vanishes at every grid node.

Fix $x_n>\eta_m$. The L1 reference operator satisfies
\begin{equation}\label{eq:L1op}
    (\mathcal D_{\mathrm{L1},\tau}^\alpha\mathcal W_m)^n
    :=\sum_{j=m}^n
    \ell_{n,j}(\mathcal W_m^j-\mathcal W_m^{j-1})
    =D_{c}^\alpha
    (\mathcal I_m\mathcal W_m)(x_n),
\end{equation}
where $D_c^{\alpha}$ indicates the Caputo derivative starting from $s=0$ (or $t=t_{m-1}$).

Using $\mathfrak e_m(\eta_m)=\mathfrak e_m(x_n)=0$
and $\mathfrak e_m(s)=O(t-s)$ as $s\to x_n^{-}$,
integration by parts gives
\begin{equation}
    (\mathcal D_{\mathrm{L1},\tau}^\alpha\mathcal W_m)^n
    -D_{c}^\alpha\mathcal W_m(x_n)
    =D_{c}^\alpha\mathfrak e_m(x_n)
    = -E_m(x_n),
\end{equation}
where
\begin{equation}\label{eq:Edef}
   E_m(x_n) :=\frac{\alpha}{\Gamma(1-\alpha)}
    \int_{\eta_m}^{x_n} \mathfrak e_m(s)(x_n-s)^{-1-\alpha}\,\dd s
    \ge0.
\end{equation}
Combining this identity with \eqref{eq:continuousres},
we obtain
\begin{equation}\label{eq:WmnL1exp}
    (\mathcal D_{\mathrm{L1},\tau}^\alpha\mathcal W_m)^n
    +\lambda\mathcal W_m^n
    =\mathcal H_m(x_n)-\kappa_m\phi_m'(x_n)-E_m(x_n).
\end{equation}

We next estimate $E_m$.
Define
\begin{equation}\label{eq:Flambda}
    \mathcal F_\lambda(t)
    :=\frac{\lambda t^{\alpha-3}}
            {(1+\lambda t^\alpha)^2},
    \qquad t>0.
\end{equation}
Since $0<\alpha<1$, the function $\mathcal F_\lambda$
is positive and decreasing.
Let $0<\vartheta\le1$ and $\vartheta t\le u\le t$.
Since
$1+\lambda u^\alpha\ge\vartheta^\alpha(1+\lambda t^\alpha)$,
we obtain
\begin{equation}\label{d:eq:Fscale}
    \mathcal F_\lambda(u)
    =
    \left(\frac ut\right)^{\alpha-3}
    \left(\frac{1+\lambda t^\alpha}
               {1+\lambda u^\alpha}\right)^2
    \mathcal F_\lambda(t)
    \le
    \vartheta^{\alpha-3}\vartheta^{-2\alpha}
    \mathcal F_\lambda(t)
    =
    \vartheta^{-3-\alpha}\mathcal F_\lambda(t).
\end{equation}
Moreover, for any $\eta>0$, monotonicity of the denominator gives
\begin{equation}\label{d:eq:Ftail}
    \int_\eta^\infty\mathcal F_\lambda(u)\,du
    \le
    \frac{\lambda}{(1+\lambda\eta^\alpha)^2}
    \int_\eta^\infty u^{\alpha-3}\,du
    =
    \frac{\lambda\eta^{\alpha-2}}
         {(2-\alpha)(1+\lambda\eta^\alpha)^2}.
\end{equation}
By \eqref{eq:phiasymp},
$\phi_m''(s)\le C_\alpha\mathcal F_\lambda(s)$
for $s\ge\eta_m$.
For a grid cell $[a,b]\subseteq[\eta_m,x_n]$ containing $s$,
the linear interpolation error satisfies
\begin{equation}\label{eq:errLinear}
    0\le\mathfrak e_m(s)
    \le\frac12(s-a)(b-s)
    \sup_{a\le\xi\le b}\phi_m''(\xi)\le \frac{\tau^2}{8} C_\alpha\mathcal F_\lambda(a).
\end{equation}
Since $\eta_m\ge4\tau$, the scaling estimate
\eqref{d:eq:Fscale} yields
\begin{equation}\label{eq:elower1}
    \mathfrak e_m(s)
    \le C_\alpha\tau^2\mathcal F_\lambda(s),
    \qquad \eta_m\le s\le x_n.
\end{equation}
For $s\ge x_n/2$, we also have $a\ge x_n/4$ and $b\le x_n$.
Retaining the factor
$b-s\le\min\{\tau, x_n-s\}$ in \eqref{eq:errLinear}
therefore gives
\begin{equation}\label{eq:elower2}
    \mathfrak e_m(s)
    \le C_\alpha\mathcal F_\lambda(x_n)\,
    \tau\min\{\tau,x_n-s\},
    \qquad \max\{\eta_m,t/2\}\le s\le x_n.
\end{equation}

Split $E_m=E_m^{(1)}+E_m^{(2)}$ over $[\eta_m, x_n/2]$ and
$[\max\{\eta_m, x_n/2\},x_n]$, respectively,
with $E_m^{(1)}=0$ when $x_n/2<\eta_m$.
By \eqref{eq:elower1}, \eqref{d:eq:Ftail},
and \eqref{eq:Hmlower},
\begin{equation}\label{eq:E1Upp}
E_m^{(1)}(x_n)  \le C_\alpha\tau^2x_n^{-1-\alpha}
    \int_{\eta_m}^\infty\mathcal F_\lambda(s)\,\dd s
    \le C_\alpha  \left(\frac{\tau}{\eta_m}\right)^2
    \mathcal H_m(x_n).
\end{equation}
Similarly, using \eqref{eq:elower2} and
\eqref{eq:Hmlower}, with $y=x_n-s$ and $x_n\ge\eta_m$, gives
\begin{equation}\label{eq:E2Upp}
    \begin{aligned}
   E_m^{(2)}(x_n)   &\le C_\alpha\mathcal F_\lambda(x_n)\tau  \int_0^\infty \min\{\tau,y\}y^{-1-\alpha}\,\dd y\\
        &\le C_\alpha\mathcal F_\lambda(x_n)\tau^{2-\alpha}
        \le C_\alpha  \left(\frac{\tau}{\eta_m}\right)^{2-\alpha}  \mathcal H_m(x_n).
    \end{aligned}
\end{equation}
Combining the two estimates yields
\begin{equation}\label{eq:Emupper}
E_m(x_n)    \le C_\alpha  \left(  \left(\frac{\tau}{\eta_m}\right)^2   +\left(\frac{\tau}{\eta_m}\right)^{2-\alpha}
    \right)\mathcal H_m(x_n).
\end{equation}

\subsection{Weight perturbation and error absorption}\label{subsec:step4}

We now return to the general scheme \eqref{eq:Wmnfrac}. Fix again $x_n=t_n-t_{m-1}>\eta_m$.  By \eqref{eq:WmZmDef} and \eqref{eq:Wmjdef},
\begin{equation}
    |\mathcal W_m^j-\mathcal W_m^{j-1}|  = \begin{cases}
        \dfrac{\tau_j}{\eta_m}\phi_m(\eta_m),   &x_j\le\eta_m,\\
        \displaystyle\int_{x_{j-1}}^{x_j}  (-\phi_m'(s))\,\dd s, &x_{j-1}\ge\eta_m.
    \end{cases}
\end{equation}
The triangle inequality and \eqref{eq:weight-deviation-bound} give
\begin{equation}\label{eq:weight-error-bound}
    \left|
    \sum_{j=m}^n d_{n,j}
    (\mathcal W_m^j-\mathcal W_m^{j-1})
    \right|
    \le \mathcal{J}_m^{(1)}(x_n)+\mathcal{J}_m^{(2)}(x_n),
\end{equation}
where the nonnegative error terms $\mathcal{J}_m^{(1)}(x_n)$ and $\mathcal{J}_m^{(2)}(x_n)$
are given respectively as follows.
\begin{align}
    \mathcal{J}_m^{(1)}(x_n)
    &:=  \frac{2M_{\mathrm{ker}}}{\alpha}
    \frac{\phi_m(\eta_m)}{\eta_m}
    \sum_{\substack{m\le j\le n\\x_j\le\eta_m}}
    G_{n,j}\tau_j,
    \label{eq:J}\\
    \mathcal{J}_m^{(2)}(x_n)
    &:=  \frac{2M_{\mathrm{ker}}}{\alpha}
    \sum_{\substack{m\le j\le n\\x_{j-1}\ge\eta_m}}
    G_{n,j}  \int_{x_{j-1}}^{x_j}(-\phi_m'(s))\,\dd s.
    \label{eq:P}
\end{align}

Combining \eqref{eq:Wmnfrac},
\eqref{eq:weight-error-bound}, and \eqref{eq:WmnL1exp},
we obtain
\begin{equation}\label{eq:DLamExp}
    (\mathcal D_\tau^\alpha\mathcal W_m)^n +\lambda\mathcal W_m^n
    \ge  \mathcal H_m(x_n)-\kappa_m\phi_m'(x_n)  - E_m(x_n)   -\mathcal{J}_m^{(1)}(x_n)-\mathcal{J}_m^{(2)}(x_n).
\end{equation}
We first estimate $\mathcal{J}_m^{(1)}$.
All sums below are restricted to $m\le j\le n$. Suppose first that $\eta_m<x_n \le2\eta_m$.
By monotonicity and a shift by $\tau$,
the left-endpoint quadrature error satisfies
\begin{equation}\label{eq:sumGnjtauj}
    \sum_{x_j\le\eta_m}G_{n,j}\tau_j
    \le
    \int_{\eta_m-\tau}^{\eta_m}
    g_{1-\alpha}(x_n-s)\,\dd s
    \le C_\alpha\tau^{1-\alpha}.
\end{equation}
Combining this with \eqref{eq:J},
\eqref{eq:Hmlower2}, and $x_n\le2\eta_m$ gives
\begin{equation}\label{eq:Jnearratio}
    \frac{\mathcal{J}_m^{(1)}(x_n)}{\mathcal H_m(x_n)}
    \le C_\alpha M_{\mathrm{ker}}
    \left(\frac{\tau}{\eta_m}\right)^{1-\alpha}.
\end{equation}
For $x_n\ge2\eta_m$, the derivative of
$s\mapsto g_{1-\alpha}(x_n-s)$ is bounded by
$C_\alpha x_n^{-1-\alpha}$ on $0\le s\le\eta_m$.
Hence,
\begin{equation}
    \sum_{x_j\le\eta_m}G_{n,j}\tau_j \le C_\alpha x_n^{-1-\alpha}
    \sum_{x_j\le\eta_m}\tau_j^2 \le C_\alpha\tau\eta_m x_n^{-1-\alpha}.
\end{equation}
Using \eqref{eq:J} and \eqref{eq:Hmlower2} again yields
\begin{equation}\label{eq:Joldratio}
    \frac{\mathcal{J}_m^{(1)}(x_n)}{\mathcal H_m(x_n)}
    \le C_\alpha M_{\mathrm{ker}}   \frac{\tau\phi_m(\eta_m)x_n^{-1-\alpha}}   {\eta_m\phi_m(\eta_m)x_n^{-1-\alpha}}
    =  C_\alpha M_{\mathrm{ker}} \frac{\tau}{\eta_m}.
\end{equation}
Since $\tau/\eta_m\le1$, the two cases imply
\begin{equation}\label{eq:J1upp}
    \mathcal{J}_m^{(1)}(x_n)
    \le C_\alpha M_{\mathrm{ker}}   \left(\frac{\tau}{\eta_m}\right)^{1-\alpha}
    \mathcal H_m(x_n),
    \qquad x_n>\eta_m.
\end{equation}

We next estimate $\mathcal{J}_m^{(2)}$.
Splitting the sum according to whether $x_{j-1}<x_n/2$
or $x_{j-1}\ge x_n/2$, we write
\begin{equation}
    \mathcal{J}_m^{(2)}(x_n)
    = \mathcal{J}_m^{(2,1)}(x_n) +  \mathcal{J}_m^{(2,2)}(x_n),
\end{equation}
where
\begin{align}
    \mathcal{J}_m^{(2,1)}(x_n)
    &:=  \frac{2M_{\mathrm{ker}}}{\alpha}   \sum_{\eta_m\le x_{j-1}<x_n/2}
    G_{n,j}  \int_{x_{j-1}}^{x_j}(-\phi_m'(s))\,\dd s,  \\
    \mathcal{J}_m^{(2,2)}(x_n)
    &:=  \frac{2M_{\mathrm{ker}}}{\alpha}
    \sum_{x_{j-1}\ge\max\{\eta_m,t/2\}}  G_{n,j}   \int_{x_{j-1}}^{x_j}(-\phi_m'(s))\,\dd s.
\end{align}

For the first part, since $x_n>\eta_m\ge4\tau$, we have $x_j\le x_n/2+\tau\le 3x_n/4$.
The same kernel derivative estimate as above therefore gives $G_{n,j}\le C_\alpha\tau x_n^{-1-\alpha}$. Combining this with \eqref{eq:Hmlower2}, we obtain
\begin{equation}\label{eq:Pold}
    \begin{aligned}
        \mathcal{J}_m^{(2,1)}(x_n)
        &\le C_\alpha M_{\mathrm{ker}}\tau x_n^{-1-\alpha}  \int_{\eta_m}^{\infty}(-\phi_m'(s))\,\dd s\\
        &=   C_\alpha M_{\mathrm{ker}}  \tau\phi_m(\eta_m)x_n^{-1-\alpha}
        \le C_\alpha M_{\mathrm{ker}}  \frac{\tau}{\eta_m}\mathcal H_m(x_n).
    \end{aligned}
\end{equation}

 For each cell in the second part, every $s\in[x_{j-1},x_j]$ satisfies $\max\{\eta_m, x_n/2\}\le s\le x_n$.
By \eqref{eq:phiasymp} with $p=1$, we have $-\phi_m'(s)\le C_\alpha(-\phi_m'(t))$.
Applying the shift argument in \eqref{eq:sumGnjtauj} to the whole interval $[0,t]$, we obtain
\begin{equation}
    \mathcal{J}_m^{(2,2)}(x_n)
    \le C_\alpha M_{\mathrm{ker}}(-\phi_m'(x_n))  \sum_{j=m}^nG_{n,j}\tau_j
    \le C_\alpha M_{\mathrm{ker}}  \tau^{1-\alpha}(-\phi_m'(x_n)).
\end{equation}
Combining the two parts gives
\begin{equation}\label{eq:J2upp}
    \mathcal{J}_m^{(2)}(x_n)
    \le  C_\alpha M_{\mathrm{ker}}  \left(  \frac{\tau}{\eta_m}\mathcal H_m(x_n)  +  \tau^{1-\alpha}(-\phi_m'(x_n))   \right),
    \qquad x_n>\eta_m.
\end{equation}
We now choose $D_*$ and $L_*$ to absorb the interpolation
and weight errors.
Under the scale condition $D_*L_*^{-(1-\alpha)}\le1$,
combining \eqref{eq:Emupper}, \eqref{eq:J1upp},
and \eqref{eq:J2upp} with
$\tau/\eta_m\le L_*^{-1}$ gives for $x_n>\eta_m$ that
\begin{equation}
    E_m(x_n) +\mathcal{J}_m^{(1)}(x_n) +\mathcal{J}_m^{(2)}(x_n)
    \le   \delta(L_*)\mathcal H_m(x_n)  +C_\alpha M_{\mathrm{ker}}  \tau^{1-\alpha}(-\phi_m'(x_n)),
\end{equation}
where
\begin{equation}
    \delta(L_*) := C_\alpha\left(   L_*^{-(2-\alpha)}  +M_{\mathrm{ker}}L_*^{-(1-\alpha)}   \right),
    \qquad L_*\ge1.
\end{equation}
Substituting this estimate into \eqref{eq:DLamExp}
and using $\kappa_m=D_*\tau^{1-\alpha}$, we obtain
\begin{equation}\label{eq:DLconstraint}
    (\mathcal D_\tau^\alpha\mathcal W_m)^n  +\lambda\mathcal W_m^n
    \ge  \left(1-\delta(L_*)\right)\mathcal H_m(x_n)
    +  \left(D_*-C_\alpha M_{\mathrm{ker}}\right)  \tau^{1-\alpha}(-\phi_m'(x_n)).
\end{equation}
First choose
\begin{equation}\label{eq:Dchoice}
    D_*:=\max\{1,\,2C_\alpha M_{\mathrm{ker}}\},
\end{equation}
so that $D_*-C_\alpha M_{\mathrm{ker}}\ge D_*/2$.
With $D_*$ fixed, choose $L_*\ge4$ sufficiently large that
\begin{equation}\label{eq:Lstar}
    D_*L_*^{-(1-\alpha)}\le1,
    \qquad
    \delta(L_*)\le\frac12.
\end{equation}
Such a choice is possible since $0<\alpha<1$.
Consequently, \eqref{eq:DLconstraint} yields
\begin{equation}\label{eq:posres}
    (\mathcal D_\tau^\alpha\mathcal W_m)^n
    +\lambda\mathcal W_m^n
    \ge
    \frac12\mathcal H_m(x_n)
    +\frac12\kappa_m(-\phi_m'(x_n))
    \ge
    \frac12\mathcal H_m(x_n),
    \qquad x_n>\eta_m.
\end{equation}
The chosen constants $D_*$ and $L_*$
depend only on $\alpha$ and $M_{\mathrm{ker}}$,
and are independent of $\lambda$, the time steps,
and the final time.

\subsection{Supersolution estimates}
\label{subsec:step5}

For $m\le n$ with $x_n\le\eta_m$, the definition of the linear part
of $\mathcal W_m$ and the positivity of the weights give
\begin{equation}\label{eq:linearres}
    (\mathcal D_\tau^\alpha\mathcal W_m)^n
    +\lambda\mathcal W_m^n
    =
    \frac{\phi_m(\eta_m)}{\eta_m}
    \left(
        \sum_{j=m}^n c_{n-j}^n\tau_j+\lambda x_n
    \right)>0.
\end{equation}
Combining this with \eqref{eq:posres} for $x_n>\eta_m$,
we obtain
\begin{equation}\label{eq:OpWnonneg}
    (\mathcal D_\tau^\alpha\mathcal W_m)^n
    +\lambda\mathcal W_m^n
    \ge0,
    \qquad m\le n\le N.
\end{equation}

At the source node $n=m$, since $x_m=\tau_m\le\eta_m$
and $\mathcal W_m^j=0$ for $j<m$, we have
\begin{equation}\label{eq:DWasym}
    \tau_m\left(
        (\mathcal D_\tau^\alpha\mathcal W_m)^m
        +\lambda\mathcal W_m^m
    \right)
    = \frac{\tau_m^2(c_0^m+\lambda)\phi_m(\eta_m)}
         {\eta_m}.
\end{equation}
Using \eqref{eq:c0mest} and $\kappa_m\eta_m^{\alpha-1}\le1$,
combining the lower bound in \eqref{eq:phiasymp}
for $p=0$ with the step-size ratio estimate
and the diagonal weight lower bound gives
\begin{equation}\label{eq:c_lambda_phi}
    \frac{\tau_m^2(c_0^m+\lambda)\phi_m(\eta_m)}
         {\eta_m}
    \ge
    c_\alpha\lambda
    \frac{\tau_m^2(c_0^m+\lambda)\eta_m^{\alpha-2}}
         {(1+\lambda\eta_m^\alpha)^2}
    \ge
    \gamma_*\frac{\lambda}{1+\lambda\tau_m^\alpha},
\end{equation}
where $\gamma_*$ is introduced in \eqref{eq:sourceconstant}.
Finally, using $\lambda\tau_m^\alpha\le1$,
we conclude from \eqref{eq:DWasym}
and \eqref{eq:c_lambda_phi} that
\begin{equation}\label{eq:sourcehalf}
 (\mathcal D_\tau^\alpha\mathcal V_m)^m
    +\lambda\mathcal V_m^m=    \frac{2\tau_m}{\gamma_*}\left(
        (\mathcal D_\tau^\alpha\mathcal W_m)^m
        +\lambda\mathcal W_m^m
    \right)
    \ge \lambda.
\end{equation}
For $n>m$, the residual is nonnegative
by \eqref{eq:OpWnonneg}.
Thus
\begin{equation}\label{eq:sourcegoal}
    (\mathcal D_\tau^\alpha\mathcal V_m)^n
    +\lambda\mathcal V_m^n
    \ge\lambda\delta_{nm},
    \qquad 1\le n\le N.
\end{equation}

We next estimate the size of $\mathcal V_m^n$.
For $x_n\ge\eta_m$, we have
$\kappa_m x_n^{\alpha-1}
\le\kappa_m\eta_m^{\alpha-1}\le1$.
Applying \eqref{eq:phiasymp} with $p=0$
and using \eqref{eq:rlambdaasy}, we obtain
\begin{equation}
    \mathcal W_m(x_n)
    =\phi_m(x_n)  \le C_\alpha r_\lambda(x_n).
\end{equation}
For $0< x_n <\eta_m$, the definition of the linear part
and the monotonicity of $r_\lambda$ give
\begin{equation}\label{eq:Wcomp}
    \mathcal W_m(x_n)
    = \frac{x_n}{\eta_m}\phi_m(\eta_m)
    \le C_\alpha\frac{x_n}{\eta_m}r_\lambda(\eta_m)
    \le C_\alpha r_\lambda(x_n).
\end{equation}
Hence $\mathcal W_m(x_n)\le C_\alpha r_\lambda(x_n)$
for all $x_n>0$.
Using the monotonicity of $r_\lambda$ again
and the definition of $\bar{u}_m$, we obtain, for $n\ge m$,
\begin{equation}\label{eq:Vsize}
    \mathcal V_m^n
    \le
    \frac{2C_\alpha}{\gamma_*}\tau_m r_\lambda(x_n)
    \le
    \frac{2C_\alpha}{\gamma_*}
    \int_{x_n-\tau_m}^{x_n}r_\lambda(s)\,\dd s
    =
    \frac{2C_\alpha}{\gamma_*}\bar{u}_m(t_n).
\end{equation}
Since $r_\lambda$ is integrable near zero,
this estimate also covers $n=m$.
Taking $C_{\mathrm{bar}}:=2C_\alpha/\gamma_*$
proves \eqref{eq:Vbound}.
This constant depends only on
$\alpha,K,\nu_{\mathrm{ker}},M_{\mathrm{ker}}$,
and is independent of $\lambda$, the time grid,
and the final time.
Together with \eqref{eq:sourcegoal},
this completes the proof of Lemma~\ref{lem:V}
and hence that of Theorem~\ref{thm:mainest}.

\section{Standard discretization schemes}\label{sec:standard-schemes}

Besides the L1 scheme, we verify that our results in this paper apply to other typical schemes on uniform meshes. In particular, we verify that Assumption~\ref{ass:proofcond} holds for some typical complete-monotonicity preserving
schemes (and thus completely positive) on uniform-mesh \cite{liwang2021cm}. We will consider the uniform-step Gr\"unwald--Letnikov, Li--Liu deconvolution,
and $\theta$-method convolution quadrature schemes; see
\cite{paper2,liliu2019caputo,liwang2021cm}.

\subsection{The Gr\"unwald--Letnikov scheme}
\label{subsec:schemes-GL}

On the uniform grid $t_n=n\tau$, let
$\omega_k:=(-1)^k\binom{\alpha}{k}$.
The unshifted Gr\"unwald--Letnikov approximation of the
Caputo derivative can be written as
\begin{equation}\label{eq:GL-Caputo}
    \cD_{\mathrm{GL},\tau}^{\alpha}v_n
    :=\tau^{-\alpha}\sum_{k=0}^{n}
    \omega_k(v_{n-k}-v_0)
    =\sum_{j=1}^{n}
    c_{n-j}^{\mathrm{GL}}(v_j-v_{j-1}),
\end{equation}
where
\begin{equation}\label{eq:GL-increment-weights}
    c_k^{\mathrm{GL}}
    =\tau^{-\alpha}\sum_{q=0}^{k}\omega_q
    =\tau^{-\alpha}
      \frac{\Gamma(k+1-\alpha)}
           {\Gamma(1-\alpha)\Gamma(k+1)},
    \qquad k\ge0.
\end{equation}
Since
\begin{equation}
    c_0^{\mathrm{GL}}=\tau^{-\alpha}>0,
    \qquad
    \frac{c_{k+1}^{\mathrm{GL}}}{c_k^{\mathrm{GL}}}
    =\frac{k+1-\alpha}{k+1}\in(0,1),
\end{equation}
condition~\ref{cond1} follows from \eqref{eq:b_and_c}.
Condition~\ref{cond2} holds with $K=1$.
To verify condition~\ref{cond3}, set $k=n-j$.
For $k\ge1$, Gautschi's inequality gives
\begin{equation}\label{eq:cGLcontrol}
    g_{1-\alpha}((k+1)\tau)
    \le c_k^{\mathrm{GL}}
    \le g_{1-\alpha}(k\tau).
\end{equation}
Since $\ell_{n,j}$ is the average of the decreasing function
$g_{1-\alpha}$ over $[k\tau,(k+1)\tau]$,
it lies between the same endpoints. Hence,
\begin{equation}
    \frac{c_k^{\mathrm{GL}}}{\ell_{n,j}}
    \ge\left(\frac{k}{k+1}\right)^\alpha
    \ge\frac{k}{k+1}
    \ge\frac12,
\end{equation}
and
\begin{equation}\label{eq:GL-relative-deviation}
    \frac{|c_k^{\mathrm{GL}}-\ell_{n,j}|}{\ell_{n,j}}
    \le
    \frac{g_{1-\alpha}(k\tau)
          -g_{1-\alpha}((k+1)\tau)}
         {g_{1-\alpha}((k+1)\tau)}
    =\left(1+\frac1k\right)^\alpha-1
    \le\frac1k
    \le\frac{2}{k+1}.
\end{equation}
For $k=0$, another application of Gautschi's inequality yields
\begin{equation}
    \frac12
    \le2^{-\alpha}
    \le\frac{c_0^{\mathrm{GL}}}{\ell_{n,n}}
    =\Gamma(2-\alpha)
    \le1.
\end{equation}
Thus, the lower bound
$c_k^{\mathrm{GL}}\ge\ell_{n,j}/2$
and the relative deviation bound
\eqref{eq:GL-relative-deviation} also hold for $k=0$.
Since $\tau_j/(t_n-t_{j-1})=1/(k+1)$,
these estimates verify condition~\ref{cond3}.
Therefore, Assumption~\ref{ass:proofcond} holds.

\subsection{A deconvolution scheme}\label{subsec:schemes-LL}

On the uniform grid, the scheme of Li and Liu~\cite{liliu2019caputo}
is specified by the inverse kernel $A=B^{-1}$:
\begin{equation}\label{eq:LL-integral-weights}
 a_k^{\mathrm{LL}}
 =\frac{\tau^\alpha}{\Gamma(1+\alpha)}
       ((k+1)^\alpha-k^\alpha),\qquad k\ge0.
\end{equation}
Thus $v_n-v_0=\sum_{j=1}^n a_{n-j}^{\mathrm{LL}}F_j$.
Writing $b^{\mathrm{LL}}=(a^{\mathrm{LL}})^{(-1)}$ for the
convolution inverse, the corresponding derivative and increment
weights are
\[
 \cD_{\mathrm{LL},\tau}^\alpha v_n
 =\sum_{k=0}^n b_k^{\mathrm{LL}}(v_{n-k}-v_0),\qquad
 c_k^{\mathrm{LL}}=\sum_{q=0}^k b_q^{\mathrm{LL}}.
\]
Proposition~3.1 of \cite{liliu2019caputo} gives
$b_0^{\mathrm{LL}}>0$, $b_k^{\mathrm{LL}}<0$ for $k\ge1$, and
$\sum_{k\ge0}b_k^{\mathrm{LL}}=0$; hence
$c_k^{\mathrm{LL}}=-\sum_{q>k}b_q^{\mathrm{LL}}>0$, verifying
complete positivity. To obtain the quantitative comparison with
L1, put $\widehat a(z)=\sum_{k\ge0}a_k^{\mathrm{LL}}z^k$.
Using $\operatorname{Li}_{-\alpha}(z)=\sum_{m\ge1}m^\alpha z^m$,
\[
 \widehat a(z)=\frac{\tau^\alpha(1-z)}{\Gamma(1+\alpha)z}
                  \operatorname{Li}_{-\alpha}(z),\qquad
 \sum_{k\ge0}c_k^{\mathrm{LL}}z^k
       =\frac1{(1-z)\widehat a(z)}.
\]
The polylogarithm expansion at $z=1$, with $x=1-z$, gives
\[
 \widehat a(z)=\tau^\alpha x^{-\alpha}
       \left(1+\frac{1-\alpha}{2}x+O_\alpha(x^{1+\alpha})\right).
\]
Inverting and applying coefficient transfer at $z=1$ yields
$c_k^{\mathrm{LL}}=\tau^{-\alpha}k^{-\alpha}
(1+O_\alpha(k^{-1}))/\Gamma(1-\alpha)$.
Since $\ell_{n,j}$ has the same leading term for $k=n-j$, and
all $c_k^{\mathrm{LL}}$ are positive, there exist
$\nu_\alpha>0$ and $M_\alpha<\infty$ such that, for every $k\ge0$,
\[
 c_k^{\mathrm{LL}}\ge\nu_\alpha\ell_{n,j},\qquad
 \left|c_k^{\mathrm{LL}}/\ell_{n,j}-1\right|
       \le\frac{M_\alpha}{k+1}.
\]
The finitely many initial indices are included by adjusting the
constants. Therefore Assumption~\ref{ass:proofcond} holds with
$K=1$, $\nu_{\mathrm{ker}}=\nu_\alpha$, and
$M_{\mathrm{ker}}=M_\alpha$; Theorem~\ref{thm:mainest} applies
under \eqref{eq:stepsizeconstraint}.

\subsection{The \texorpdfstring{$\theta$}{theta}-method convolution quadrature}
\label{subsec:schemes-BE}

On the uniform grid, fix $\theta\ge1$. The generating function
for fractional integration and its inverse are
\begin{equation}\label{eq:theta-generating-function}
 \sum_{k\ge0}a_k^\theta z^k
 =\tau^\alpha\left(\frac{\theta+(1-\theta)z}{1-z}\right)^\alpha,
 \qquad
 \sum_{k\ge0}b_k^\theta z^k
 =\tau^{-\alpha}\left(\frac{1-z}{\theta+(1-\theta)z}\right)^\alpha.
\end{equation}
Thus the Caputo approximation and its increment weights are
\begin{equation}\label{eq:theta-Caputo}
 \cD_{\theta,\tau}^\alpha v_n
 :=\sum_{k=0}^n b_k^\theta(v_{n-k}-v_0)
 =\sum_{j=1}^n c_{n-j}^\theta(v_j-v_{j-1}),\qquad
 \sum_{k\ge0}c_k^\theta z^k
 =\tau^{-\alpha}\frac{(1-z)^{\alpha-1}}
                         {(\theta+(1-\theta)z)^\alpha}.
\end{equation}
The case $\theta=1$ is precisely backward Euler convolution
quadrature, and hence coincides with the GL scheme above.
For general $\theta\ge1$, Euler's beta integral gives the useful
moment representation
\begin{equation}\label{eq:theta-moments}
 c_k^\theta
 =\tau^{-\alpha}\theta^{-\alpha}\frac{\sin(\pi\alpha)}\pi
   \int_0^1(1-v/\theta)^k v^{\alpha-1}(1-v)^{-\alpha}\,dv,
 \qquad k\ge0.
\end{equation}
Indeed, summing the geometric series inside this integral recovers
the generating function in \eqref{eq:theta-Caputo}.
Since $0\le1-v/\theta\le1$, the weights $c_k^\theta$ are positive
and decreasing. Consequently $b_0^\theta=c_0^\theta>0$,
$b_k^\theta=c_k^\theta-c_{k-1}^\theta\le0$ for $k\ge1$, and
$\sum_{j=0}^k b_j^\theta=c_k^\theta>0$. This verifies complete
positivity directly, consistently with \cite{liwang2021cm}.
Moreover, $1-v/\theta\ge1-v$ implies
\[
 c_k^\theta\ge\theta^{-\alpha}c_k^{\mathrm{GL}}
       \ge\tfrac12\theta^{-\alpha}\ell_{n,j},\qquad k=n-j.
\]
For fixed $\theta$, expansion at $z=1$ gives
\[
 (1-z)^{\alpha-1}(1+(\theta-1)(1-z))^{-\alpha}
 =(1-z)^{\alpha-1}
  -\alpha(\theta-1)(1-z)^\alpha
  +O_{\alpha,\theta}((1-z)^{\alpha+1}).
\]
Using the binomial coefficients (or expanding
\eqref{eq:theta-moments} at $v=0$), it follows that
\[
 c_k^\theta=\frac{\tau^{-\alpha}k^{-\alpha}}{\Gamma(1-\alpha)}
                 (1+O_{\alpha,\theta}(k^{-1})),\qquad
 \ell_{n,j}=\frac{\tau^{-\alpha}k^{-\alpha}}{\Gamma(1-\alpha)}
                 (1+O_\alpha(k^{-1})).
\]
Thus $|c_k^\theta/\ell_{n,j}-1|\le C_{\alpha,\theta}/(k+1)$
for all $k\ge0$, enlarging the constant for finitely many indices.
Assumption~\ref{ass:proofcond} holds with
$K=1$, $\nu_{\mathrm{ker}}=\theta^{-\alpha}/2$, and
$M_{\mathrm{ker}}=C_{\alpha,\theta}$, so Theorem~\ref{thm:mainest}
applies under \eqref{eq:stepsizeconstraint}. The constants need
not be uniform as $\theta\to\infty$; already
$c_0^\theta/\ell_{n,n}=\theta^{-\alpha}\Gamma(2-\alpha)\to0$.
The restriction on $\theta$ matters: for $\theta>0$,
\[
 b_2^\theta=\tau^{-\alpha}
       \frac{\alpha(1+\alpha-2\theta)}{2\theta^{\alpha+2}},
\]
which is positive when $\theta<(1+\alpha)/2$ and then violates
complete positivity. In particular, the trapezoidal choice
$\theta=1/2$ is excluded for every $0<\alpha<1$.

\section[Decay-preserving estimate for linear subdiffusion models]{Decay-preserving estimate for linear\texorpdfstring{\\}{ }subdiffusion models}
\label{sec:linear-subdiffusion}

\subsection{Model problem and parameter-dependent convergence}

Consider the linear reaction--subdiffusion problem
\begin{equation}\label{eq:subdiffusion-model}
\begin{aligned}
 D_0^\alpha u-\Delta u&=\kappa u+f,
       &&x\in\Omega,\quad t>0,\\
 u&=0,&&x\in\partial\Omega,\quad t>0,\\
 u(x,0)&=u_0(x),&&x\in\Omega,
\end{aligned}
\end{equation}
where $0<\alpha<1$ and $\Omega$ is a bounded domain.
Throughout this section, $\|\cdot\|$ and $\langle\cdot,\cdot\rangle$
denote the norm and inner product in $L^2(\Omega)$.
Let $\lambda_1>0$ be the first Dirichlet eigenvalue of $-\Delta$,
so that
\[
 \langle-\Delta v,v\rangle\ge\lambda_1\|v\|^2,
 \qquad v\in H^2(\Omega)\cap H_0^1(\Omega).
\]
Write $A=-\Delta-\kappa I$ for the self-adjoint Dirichlet
realization and assume the dissipative gap
$\lambda:=\lambda_1-\kappa>0$. Its domain is
$D(A)=\{v\in H_0^1(\Omega):\Delta v\in L^2(\Omega)\}$,
which equals $H^2(\Omega)\cap H_0^1(\Omega)$ on a bounded
$C^{1,1}$ domain or a convex polygon.

Zhang, Zhang, and Zhao~\cite{zhangzhangzhao-decay} report a
parameter-dependent convergence phenomenon for the L1 method.
For the manufactured solution $u(x,t)=t^\alpha\sin(\pi x/L)$
on $\Omega=(0,L)$, their Table~1 and Tables~10--12 show apparent
orders between $1$ and $2-\alpha$ at a fixed final time.
For example, with $\alpha=1/2$, $L=\pi$, $\kappa=0$, and $T=1$,
the reported orders decrease from $1.06$ to $1.02$ as $N$ increases
from $64$ to $512$. With $L=1$, $\kappa=-8$, and $T=10$, they
instead decrease from $1.45$ to $1.37$ over the same refinements.
Thus, a rate close to $2-\alpha=3/2$ can be observed on practical
meshes, although further refinement tends to reveal first-order
behavior. Smaller domains and more negative reaction coefficients
increase the dissipative gap $\lambda_1-\kappa$; on an interval,
$\lambda_1=\pi^2/L^2$. Increasing the final time also favors the
higher observed rates in the reported experiments. Their Table~13
reports rates approaching one when $\kappa\ge\lambda_1$.

Conjecture~4.1 of \cite{zhangzhangzhao-decay} motivates a
bound separating first-order and $(2-\alpha)$-order contributions.
We distinguish fixed-time estimates for general smooth forcing
from estimates uniform for all positive times when the forcing
approaches a stationary limit. The analysis concerns temporal
error; it also applies to a fixed positive self-adjoint spatial
discretization, with its smallest eigenvalue replacing $\lambda_1$.

\subsection{Finite time estimates for general smooth forcing}
\label{subsec:subdiffusion-finite}

Fix $0<T\le T_*$, where $T_*=O(1)$ is independent of the mesh.
For general smooth $f$ we assume $u\in C([0,T];L^2(\Omega))$,
$u\in C^2((0,T];L^2(\Omega))$, and
\begin{equation}\label{eq:subdiffusion-finite-regularity}
 \|\partial_t^k u(t)\|\le M_T t^{\alpha-k},
 \qquad k=1,2,\quad 0<t\le T.
\end{equation}
A sufficient data condition is $u_0\in D(A)$ and
$f\in C^2([0,T];L^2(\Omega))$, with
\[
 M_T\le C_\alpha\bigl(\|Au_0-f(0)\|
       +T\|f'\|_{L^\infty(0,T;L^2)}
       +T^2\|f''\|_{L^\infty(0,T;L^2)}\bigr).
\]
Indeed, writing $S_A(t)=E_\alpha(-t^\alpha A)$ and
$K_A(t)=A^{-1}r_A(t)$, differentiation of the solution formula gives
\[
 u'=-K_A(t)(Au_0-f(0))+K_A*f',\qquad
 u''=-K_A'(t)(Au_0-f(0))+K_A(t)f'(0)+K_A*f''.
\]
The bounds $\|K_A^{(j)}(t)\|\le C_\alpha t^{\alpha-1-j}$,
$j=0,1$, follow from Lemma~\ref{d:lem:ml} and complete
monotonicity, as detailed below in the proof of
Proposition~\ref{prop:subdiffusion-regularity}.
No compatibility condition $Au_0=f(0)$ is imposed.

Write $\tau=\max_j\tau_j$ and suppose that
$\tau/\min_j\tau_j\le K$. Let $u_n$ solve the L1 scheme
\begin{equation}\label{eq:subdiffusion-L1}
 \cD_\tau^\alpha u_n-\Delta u_n=\kappa u_n+f(t_n),
 \qquad 0<t_n\le T,
\end{equation}
with the same homogeneous boundary condition and a prescribed numerical
initial value, where
$\cD_\tau^\alpha$ is given by \eqref{eq:L1scheme}.
Define the temporal error and consistency error by
\begin{equation}\label{eq:subdiffusion-errors}
 e_n:=u_n-u(t_n),\qquad
 \eta_n:=\cD_\tau^\alpha u(t_n)-D_0^\alpha u(t_n).
\end{equation}
Here $\cD_\tau^\alpha u(t_n)$ means that the L1 operator is applied
to the sequence of exact nodal values.

\begin{lemma}\label{lem:subdiffusion-consistency-finite}
Under \eqref{eq:subdiffusion-finite-regularity}, there is a constant
$C=C(\alpha,K)$ such that
\begin{equation}\label{eq:subdiffusion-consistency-split-finite}
 \|\eta_n\|\le C M_T\left[
       \left(\frac\tau{t_n}\right)^{1+\alpha}
       +\left(\frac{\tau}{t_n}\right)^{2-\alpha}\right],\qquad 0<t_n\le T.
\end{equation}
The constant is independent of $\lambda$, $n$, and $T$;
all dependence on the data and time horizon is in $M_T$.
\end{lemma}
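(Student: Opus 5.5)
We need the standard L1 truncation error bound on a quasi-uniform mesh. The plan is to write $\eta_n$ as the Caputo derivative of the piecewise linear interpolation error. Let $\Pi u$ be the piecewise linear interpolant of $u$ on the grid and set $\mathfrak r:=\Pi u-u$. Then $\cD_\tau^\alpha u(t_n)=D_0^\alpha(\Pi u)(t_n)$, so $\eta_n=D_0^\alpha\mathfrak r(t_n)$. Since $\mathfrak r$ vanishes at all nodes, integrating by parts on each cell as in \eqref{eq:Edef} gives
\begin{equation*}
\eta_n=-\frac{\alpha}{\Gamma(1-\alpha)}\int_0^{t_n}\mathfrak r(s)(t_n-s)^{-1-\alpha}\,\dd s .
\end{equation*}
This representation needs care in two places. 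On the first cell, $u'$ is only integrable, behaving like $s^{\alpha-1}$, so there we avoid integrating by parts and instead estimate the first-cell contribution $\int_0^{t_1}\mathfrak r'(s)\,g_{1-\alpha}(t_n-s)\,\dd s$ directly. At $s\to t_n^-$ the boundary term vanishes because $\mathfrak r(s)=O(t_n-s)$ there.

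We bound the pieces as follows.
\begin{enumerate}[(i)]
\item \emph{First cell} $[0,t_1]$. We have $\int_0^{t_1}|\mathfrak r'|\le 2\int_0^{t_1}|u'|\le CM_T\tau^\alpha$. For $n\ge2$ the kernel satisfies $g_{1-\alpha}(t_n-s)$ or $(t_n-s)^{-1-\alpha}$ of size $\lesssim t_n^{-1-\alpha}\tau$ after one integration by parts. To be safe, we estimate the integrated-by-parts form with $|\mathfrak r|\le CM_T\tau^\alpha$ and $(t_n-s)^{-1-\alpha}\le Ct_n^{-1-\alpha}$ (using $t_n\ge 2\tau/K$). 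This gives $CM_T\tau^{1+\alpha}t_n^{-1-\alpha}$, the first term of the bound. The case $n=1$ is handled directly: $|\eta_1|\le CM_T$, which is consistent with the bound since $\tau/t_1\ge1$ there.
\item \emph{Cells in $[t_1,t_n/2]$}. On a cell $[t_{j-1},t_j]$ with $j\ge2$, $|\mathfrak r|\le \tfrac18\tau^2\sup|u''|\le CM_T\tau^2 t_{j-1}^{\alpha-2}$, and $t_{j-1}\ge s/2$ on such a cell. So the contribution is at most $CM_T\tau^2 t_n^{-1-\alpha}\int_{t_1}^{t_n}s^{\alpha-2}\,\dd s\le CM_T\tau^2t_n^{-1-\alpha}\tau^{\alpha-1}$, which is again the $(\tau/t_n)^{1+\alpha}$ term.
\item \emph{Cells in $[t_n/2,t_n]$}. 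Here $|u''|\le CM_Tt_n^{\alpha-2}$, and $|\mathfrak r(s)|\le CM_Tt_n^{\alpha-2}\tau\min\{\tau,t_n-s\}$, as in \eqref{eq:elower2}. Integrating against $(t_n-s)^{-1-\alpha}$ gives $CM_Tt_n^{\alpha-2}\tau^{2-\alpha}=CM_T(\tau/t_n)^{2-\alpha}$.
\end{enumerate}
If $t_n/2<t_1$, only cases (i) and (iii) occur. All constants depend only on $\alpha$ and $K$; $K$ enters through $\tau_j\ge\tau/K$, which converts $t_1$ and the node counts into multiples of $\tau$.

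The main obstacle is the first cell. There $u''$ is not integrable against $\tau^2$, the interpolation-error bound must use only the $u'$ estimate, and one must check that the integration by parts, or the direct treatment, is legitimate given $u\in C([0,T])$ with $u'\sim t^{\alpha-1}$. We handle this by writing the first-cell contribution as $\int_0^{t_1}\mathfrak r'(s)g_{1-\alpha}(t_n-s)\,\dd s$. Then $\mathfrak r(0)=\mathfrak r(t_1)=0$ lets us integrate by parts on that cell alone, with $\mathfrak r$ bounded by $\int_0^{t_1}|u'|\le CM_T\tau^\alpha$.
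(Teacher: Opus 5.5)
Your proposal is correct and follows essentially the same route as the paper. Both arguments use the integration-by-parts representation of $\eta_n$ through the piecewise linear interpolation error, the bounds $CM_T\tau^\alpha$ on the first cell and $CM_T\tau_j^2t_{j-1}^{\alpha-2}$ on later cells, and a split at $t_n/2$ with the bound $\min\{\tau^2,\tau(t_n-s)\}$ near $t_n$.

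The one difference is how the earliest levels are handled. The paper treats every level with $t_n<4\tau$ by separate crude bounds on $\cD_\tau^\alpha u(t_n)$ and $D_0^\alpha u(t_n)$. You isolate only $n=1$ and run the three-region split for all $n\ge2$. That also works, but the estimate $(t_n-s)^{-1-\alpha}\lesssim t_n^{-1-\alpha}$ on $[0,t_1]$ should be justified by $t_n-t_1\ge t_n/(2K)$, not by $t_n\ge 2\tau/K$ as you wrote.
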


\begin{proof}
Let $I_\tau u$ be the piecewise linear interpolant and set
$w=I_\tau u-u$. Integration by parts, using $w(t_j)=0$, gives
\[
 \eta_n=-\frac\alpha{\Gamma(1-\alpha)}
       \int_0^{t_n}(t_n-s)^{-1-\alpha}w(s)\,ds.
\]
The endpoint term at $t_n$ vanishes since $w(s)=O(t_n-s)$ there.
On the first interval, \eqref{eq:subdiffusion-finite-regularity} implies
$\|w(s)\|\le C M_T\tau_1^\alpha$. On every later interval,
linear interpolation gives
\[
 \|w(s)\|\le C M_T\tau_j^2t_{j-1}^{\alpha-2},
 \qquad s\in[t_{j-1},t_j],\quad j\ge2.
\]

If $t_n<4\tau$, there are at most $4K$ intervals. The increment
bound $\|u(t_j)-u(t_{j-1})\|\le C M_T(t_j^\alpha-t_{j-1}^\alpha)$
and the L1 weights give $\|\cD_\tau^\alpha u(t_n)\|\le C M_T$;
the Caputo integral likewise gives $\|D_0^\alpha u(t_n)\|\le C M_T$.
Since $t_n<4\tau$, the first term on the right of
\eqref{eq:subdiffusion-consistency-split-finite} alone controls this bound.

Now, assume $t_n\ge4\tau$. Since adjacent positive grid
points are comparable with constants depending on $K$, these
estimates yield
\[
 \int_0^{t_n/2}\|w(s)\|\,ds
 \le C M_T\left(\tau_1^{1+\alpha}
        +\tau^2\int_{\tau_1}^{t_n}s^{\alpha-2}\,ds\right)
 \le C M_T\tau^{1+\alpha}.
\]
For $s\ge t_n/2$, the interpolation interval lies in
$[t_n/4,t_n]$, and hence
\[
 \|w(s)\|\le C M_T t_n^{\alpha-2}
                     \min\{\tau^2,\tau(t_n-s)\}.
\]
For the second bound inside the minimum, use the endpoint
interpolation bound on the last interval; on earlier intervals,
$t_n-s\ge\tau_n\ge\tau/K$ suffices.
Splitting the integral at $t_n/2$ therefore gives
\begin{align*}
 \|\eta_n\|
 &\le C M_T\tau^{1+\alpha}t_n^{-1-\alpha}
   +C M_T t_n^{\alpha-2}
       \int_0^{t_n/2}r^{-1-\alpha}\min\{\tau^2,\tau r\}\,dr\\
 &\le C M_T\bigl(\tau^{1+\alpha}t_n^{-1-\alpha}
                       +\tau^{2-\alpha}t_n^{\alpha-2}\bigr).
\end{align*}

\end{proof}

\begin{theorem}\label{thm:subdiffusion-decay-finite}
Suppose that \eqref{eq:subdiffusion-finite-regularity} holds,
$\lambda=\lambda_1-\kappa>0$, $\tau/\min_j\tau_j\le K$, and
$\lambda\tau^\alpha\le1$. Then
\begin{equation}\label{eq:subdiffusion-decay-finite}
\begin{aligned}
 \|e_n\|\le{}&\|e_0\|E_\alpha(-\rho^{-1}\lambda t_n^\alpha)\\
 &+C M_T\left[
   \frac{\tau t_n^{\alpha-1}}{(1+\lambda t_n^\alpha)^2}
   +\frac{\tau^{2-\alpha}t_n^{2\alpha-2}}
          {1+\lambda t_n^\alpha}\right],
 \qquad 0<t_n\le T,
\end{aligned}
\end{equation}
where $C$ depends only on $\alpha$ and $K$, and $\rho$ is the
constant in \eqref{eq:discrete-gronwall} for L1.
\end{theorem}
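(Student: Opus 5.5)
The plan is to write the error equation, reduce it to the scalar discrete inequality \eqref{eq:intro-discrete} for $v_n=\|e_n\|$, apply the discrete Gr\"onwall inequality \eqref{eq:discrete-gronwall}, and then sum the resulting Mittag--Leffler weights against the consistency bound of Lemma~\ref{lem:subdiffusion-consistency-finite}.

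\textbf{Step 1: error equation and scalar inequality.} Subtracting \eqref{eq:subdiffusion-model} at $t_n$ from \eqref{eq:subdiffusion-L1} gives
\[
\cD_\tau^\alpha e_n+Ae_n=-\eta_n .
\]
Take the inner product with $e_n$. Since the L1 weights satisfy \eqref{eq:cmono}, the standard estimate $\langle\cD_\tau^\alpha e_n,e_n\rangle\ge\|e_n\|\,\cD_\tau^\alpha\|e_n\|$ holds. To see this, expand $\cD_\tau^\alpha$ as in \eqref{d:eq:expand} with nonnegative coefficients on the past values and apply Cauchy--Schwarz. Together with $\langle Ae_n,e_n\rangle\ge\lambda\|e_n\|^2$ and division by $\|e_n\|$ (the case $e_n=0$ is trivial), this gives
\[
\cD_\tau^\alpha\|e_n\|+\lambda\|e_n\|\le\|\eta_n\| .
\]
For the L1 scheme, Assumption~\ref{ass:proofcond} holds with $M_{\mathrm{ker}}=0$ and $\nu_{\mathrm{ker}}=1$, together with the mesh ratio $K$. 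Since $\lambda\tau^\alpha\le1$, \eqref{eq:discrete-gronwall} yields
\[
\|e_n\|\le\|e_0\|E_\alpha(-\rho^{-1}\lambda t_n^\alpha)+\frac{A}{\lambda}\sum_{m=1}^n\|\eta_m\|\,\bar u_m(t_n).
\]

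\textbf{Step 2: bound the forcing sum.} For $m<n$, Lemma~\ref{d:lem:ml} and the monotonicity of $r_\lambda$ give
\[
\frac{\bar u_m(t_n)}{\lambda}\le\frac{\tau_m}{\lambda}\,r_\lambda(t_n-t_m)\asymp\frac{\tau_m(t_n-t_m)^{\alpha-1}}{(1+\lambda(t_n-t_m)^\alpha)^2}.
\]
For $m=n$, use instead
\[
\frac{\bar u_n(t_n)}{\lambda}=\frac{1-E_\alpha(-\lambda\tau_n^\alpha)}{\lambda}\le C\tau_n^\alpha .
\]
Insert $\|\eta_m\|\le CM_T[(\tau/t_m)^{1+\alpha}+(\tau/t_m)^{2-\alpha}]$ and split the sum into $t_m\le t_n/2$ and $t_m>t_n/2$.

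In the early range $t_m\le t_n/2$, we have $t_n-t_m\ge t_n/2$, so the kernel is comparable to $t_n^{\alpha-1}(1+\lambda t_n^\alpha)^{-2}$. The sum $\sum_m\tau_m(\tau/t_m)^{1+\alpha}$ is at most $C\tau$, because it is a convergent sum dominated by its first terms, where $t_m\approx m\tau$. The sum $\sum_m\tau_m(\tau/t_m)^{2-\alpha}$ is at most $C\tau^{2-\alpha}t_n^{\alpha-1}$. Hence the early range contributes
\[
\frac{\tau t_n^{\alpha-1}}{(1+\lambda t_n^\alpha)^2}+\frac{\tau^{2-\alpha}t_n^{2\alpha-2}}{(1+\lambda t_n^\alpha)^2},
\]
which is within the claimed bound.

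In the recent range $t_m>t_n/2$, we have $\|\eta_m\|\le CM_T(\tau/t_n)^{2-\alpha}$, using $(\tau/t)^{1+\alpha}\le(\tau/t)^{2-\alpha}$ when $\tau\le t$ and $\alpha\ge1/2$. When $\alpha<1/2$, the first term is dominated by the second anyway for $\tau\le t_n$. Summing the weights over this range is bounded by
\[
\frac{1}{\lambda}\sum_{m}\bar u_m(t_n)\le\frac{1}{\lambda}\bigl(1-E_\alpha(-\nu^{-1}\lambda(t_n/2)^\alpha)\bigr)\le C\min\{t_n^\alpha,\lambda^{-1}\}\asymp\frac{t_n^\alpha}{1+\lambda t_n^\alpha},
\]
by Lemma~\ref{eq:sumR}, or directly by integrating $r_\lambda$. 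This gives the term
\[
\frac{\tau^{2-\alpha}t_n^{2\alpha-2}}{1+\lambda t_n^\alpha}.
\]
The small-time case $t_n<4\tau$ is covered by the same bounds with $\bar u_m\le C\lambda\tau^\alpha$, after checking that $\tau^\alpha\lesssim\tau t_n^{\alpha-1}$ there.

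\textbf{Main obstacle.} The delicate point is the early-range bookkeeping, specifically the $(\tau/t_m)^{1+\alpha}$ sum near $m=1$. There I must use the weight $\bar u_m(t_n)\lesssim\tau_m r_\lambda(t_n/2)$, which is valid since $t_n-t_m\ge t_n/2$, rather than the cruder bound $\beta_0$. Doing so is what preserves the extra factor $(1+\lambda t_n^\alpha)^{-2}$. A second point to check is the mismatch of the Gr\"onwall weights for general $A$. Because \eqref{eq:discrete-gronwall} uses the exact $\lambda$ with $\bar u_m$, and only the constant $A$ depends on the scheme, this mismatch causes no loss.
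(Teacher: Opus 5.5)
Your overall route is the paper's: the norm inequality from the sign structure of the L1 weights, the Gr\"onwall bound \eqref{eq:discrete-gronwall}, and a split at $t_n/2$. The early part is weighted by $r_\lambda$ at distance at least $t_n/2$, and the recent part by the total weight $H_\lambda(t_n):=\lambda^{-1}(1-E_\alpha(-\lambda t_n^\alpha))\le C_\alpha t_n^\alpha/(1+\lambda t_n^\alpha)$. The paper writes the sum as $\int_0^{t_n}h(s)k_\lambda(t_n-s)\,ds$ with a continuous majorant $h$, which is equivalent.

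\textbf{The gap: the recent range for $\alpha<1/2$.} For $x=\tau/t_n\in(0,1)$, the inequality $x^{1+\alpha}\le x^{2-\alpha}$ holds only if $\alpha\ge1/2$. For $\alpha<1/2$ it is reversed, so your claim that ``the first term is dominated by the second anyway'' is false. The $(\tau/t_m)^{1+\alpha}$ part of $\|\eta_m\|$ is then the larger one, and your weights only give
\[
CM_T\Bigl(\frac{\tau}{t_n}\Bigr)^{1+\alpha}H_\lambda(t_n)\le CM_T\,\frac{\tau^{1+\alpha}t_n^{-1}}{1+\lambda t_n^\alpha}.
\]
This is not controlled by the $(2-\alpha)$ term of the theorem. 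Its ratio to $\tau^{2-\alpha}t_n^{2\alpha-2}/(1+\lambda t_n^\alpha)$ is $(t_n/\tau)^{1-2\alpha}$, which is unbounded under refinement.

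The missing step is to absorb this term into the first-order term. Its ratio to $\tau t_n^{\alpha-1}/(1+\lambda t_n^\alpha)^2$ is
\[
\Bigl(\frac{\tau}{t_n}\Bigr)^\alpha(1+\lambda t_n^\alpha)=\Bigl(\frac{\tau}{t_n}\Bigr)^\alpha+\lambda\tau^\alpha\le K^\alpha+1.
\]
This uses $t_n\ge\tau_1\ge\tau/K$ and the step restriction $\lambda\tau^\alpha\le1$ a second time; in your write-up it is used only to invoke \eqref{eq:discrete-gronwall}. This is exactly the absorption step in the paper's proof, and with it your argument covers all $0<\alpha<1$.

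\textbf{Two smaller slips that do not affect the result.}
\begin{itemize}
\item \emph{Early range.} The sum $\sum_{t_m\le t_n/2}\tau_m(\tau/t_m)^{2-\alpha}$ is not $O(\tau^{2-\alpha}t_n^{\alpha-1})=O\bigl(\tau(\tau/t_n)^{1-\alpha}\bigr)$, since the $m=1$ term alone is at least $\tau_1\ge\tau/K$. Because $2-\alpha>1$, it is $O(\tau)$, just like the $(1+\alpha)$ sum. So the early range contributes $CM_T\tau t_n^{\alpha-1}/(1+\lambda t_n^\alpha)^2$, which is still within the claim.
\item \emph{Recent range weights.} The first recent cell may start before $t_n/2$, so the summed weights equal $\lambda^{-1}\bigl(1-E_\alpha(-\lambda(t_n-t_{m_0-1})^\alpha)\bigr)$ with $t_n-t_{m_0-1}\ge t_n/2$. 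Your expression evaluated at $t_n/2$ is therefore a lower bound, not an upper bound. Also, Lemma~\ref{eq:sumR} concerns sums of $R$, not of $\bar u_m$. Bound these weights by $H_\lambda(t_n)$ directly; your final estimate $C\min\{t_n^\alpha,\lambda^{-1}\}$ is then correct.
\end{itemize}
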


\begin{proof}
Subtracting the equation for the exact nodal values from
\eqref{eq:subdiffusion-L1} gives
\begin{equation}\label{eq:subdiffusion-error-equation}
 \cD_\tau^\alpha e_n-\Delta e_n=\kappa e_n-\eta_n.
\end{equation}
The L1 weights increase with the interval index $j$.
The subgradient inequality for the norm, applied to the resulting
nonnegative combination of differences $e_n-e_j$, gives, when
$e_n\ne0$,
\[
 \cD_\tau^\alpha\|e_n\|
 \le\left\langle\frac{e_n}{\|e_n\|},
                         \cD_\tau^\alpha e_n\right\rangle
 \le-\lambda\|e_n\|+\|\eta_n\|.
\]
When $e_n=0$, the same scalar inequality follows directly from
the signs of the L1 coefficients: its left-hand side is
nonpositive. Thus \eqref{eq:discrete-gronwall} applies. Introduce
\[
 k_\lambda(t):=\lambda^{-1}r_\lambda(t)
   =\alpha t^{\alpha-1}E_\alpha'(-\lambda t^\alpha),\qquad
 H_\lambda(t):=\int_0^t k_\lambda(s)\,ds
   =\frac{1-E_\alpha(-\lambda t^\alpha)}\lambda.
\]
Keeping the factor $\lambda^{-1}$ in
\eqref{eq:discrete-gronwall}, we obtain
\begin{equation}\label{eq:subdiffusion-error-convolution}
 \|e_n\|\le\|e_0\|E_\alpha(-\rho^{-1}\lambda t_n^\alpha)
  +C\sum_{m=1}^n\|\eta_m\|
         \int_{t_{m-1}}^{t_m}k_\lambda(t_n-s)\,ds.
\end{equation}
By Lemma~\ref{d:lem:ml},
\begin{equation}\label{eq:subdiffusion-response-bounds}
 k_\lambda(t)\asymp_\alpha
           \frac{t^{\alpha-1}}{(1+\lambda t^\alpha)^2},
 \qquad
 H_\lambda(t)\le C_\alpha\frac{t^\alpha}{1+\lambda t^\alpha}.
\end{equation}
Indeed, integration of the kernel bound gives
$H_\lambda(t)\le C_\alpha t^\alpha$, while positivity of
$E_\alpha(-\lambda t^\alpha)$ gives $H_\lambda(t)\le\lambda^{-1}$.

Set $a=1+\alpha$, $b=2-\alpha$, and define
\[
 h(s):=\left(\frac\tau{s+\tau}\right)^a
       +(\tau/(s+\tau))^b.
\]
For $s\in[t_{m-1},t_m]$, we have
$s+\tau\le(1+K)t_m$ and $s\le t_m$. Thus Lemma~\ref{lem:subdiffusion-consistency-finite}
gives $\|\eta_m\|\le C M_T h(s)$. Both powers exceed one, so
$\int_0^\infty h(s)\,ds\le C_\alpha\tau$. Splitting at $t/2$,
using \eqref{eq:subdiffusion-response-bounds} and the monotonicity
of $h$, yields, for every $0<t=t_n\le T$,
\begin{equation}\label{eq:subdiffusion-convolution-split}
 \int_0^t h(s)k_\lambda(t-s)\,ds
 \le C\tau k_\lambda(t)+C\left[
 \tau^a t^{-a}+\tau^b t^{-b}
 \right]H_\lambda(t).
\end{equation}
Indeed, on the first half $k_\lambda(t-s)\le C_\alpha k_\lambda(t)$;
on the second half each factor of $h(s)$ is bounded by its value
at $t/2$, which is comparable to the displayed expression.
This includes the first few time levels.

The first recent contribution is absorbed into $\tau k_\lambda(t)$,
since
\[
 \frac{\tau^{1+\alpha}t^{-1-\alpha}H_\lambda(t)}
      {\tau k_\lambda(t)}
 \le C_\alpha\left[\left(\frac\tau t\right)^\alpha
                              +\lambda\tau^\alpha\right]
 \le C_{\alpha,K}.
\]
Here $t=t_n\ge\tau/K$ and $\lambda\tau^\alpha\le1$ were used.
The other recent contribution is at most
$C\tau^{2-\alpha}t^{2\alpha-2}/(1+\lambda t^\alpha)$.
Substitution in \eqref{eq:subdiffusion-error-convolution} proves
the finite-time theorem.

\end{proof}

\subsection{Long time estimates for forcing with a stationary limit}
\label{subsec:subdiffusion-infinite}

Suppose $f(t)\to f_\infty$ in $L^2(\Omega)$ as $t\to\infty$,
and set $u_\infty=A^{-1}f_\infty$. In addition to continuity at
zero and twice continuous differentiability for $t>0$, assume
\begin{equation}\label{eq:subdiffusion-regularity}
 \|\partial_t^k u(t)\|\le C_u
       \frac{t^{\alpha-k}}{(1+\lambda t^\alpha)^2},
 \qquad k=1,2,\quad t>0.
\end{equation}
Here $C_u$ is independent of the final observation time.
The transition from $t^{\alpha-k}$ to
$\lambda^{-2}t^{-\alpha-k}$ occurs at $t\asymp\lambda^{-1/\alpha}$.
Convergence of $f$ alone does not imply this derivative estimate:
its temporal decay and regularity must also be controlled.

\begin{proposition}[Sufficient data conditions]\label{prop:subdiffusion-regularity}
Let $A=-\Delta-\kappa I$ be the positive self-adjoint Dirichlet
realization in $L^2(\Omega)$, with $A\ge\lambda I$, $\lambda>0$.
If $f(t)\equiv f_\infty\in L^2(\Omega)$ and $u_0\in D(A)$,
then \eqref{eq:subdiffusion-regularity} holds with
\[
 C_u=C_\alpha\|Au_0-f_\infty\|.
\]
\end{proposition}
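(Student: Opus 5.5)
With $f\equiv f_\infty$, I would work with the shifted quantity $w=u-u_\infty$, where $u_\infty=A^{-1}f_\infty$. Since $Au_\infty=f_\infty$, $w$ satisfies the homogeneous equation $D_0^\alpha w+Aw=0$ with initial value $w(0)=u_0-u_\infty$. Hence
\[
 w(t)=S_A(t)(u_0-u_\infty),\qquad S_A(t)=E_\alpha(-t^\alpha A).
\]
Because $u_0\in D(A)$, we may write $A(u_0-u_\infty)=Au_0-f_\infty=:g$. The time derivatives of $u$ and $w$ coincide. Differentiating in $t$ and using $\frac{d}{dt}E_\alpha(-\mu t^\alpha)=-r_\mu(t)$ gives
\[
 u'(t)=-A^{-1}r_A(t)g=-K_A(t)g,\qquad u''(t)=-K_A'(t)g,
\]
in the notation introduced earlier. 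It therefore suffices to prove the operator bounds
\[
 \|K_A^{(j)}(t)\|\le C_\alpha\frac{t^{\alpha-1-j}}{(1+\lambda t^\alpha)^2},\qquad j=0,1.
\]

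I would prove these by the spectral theorem. Since $A$ is self-adjoint with spectrum in $[\lambda,\infty)$, one has $\|K_A^{(j)}(t)\|\le\sup_{\mu\ge\lambda}|\mu^{-1}\partial_t^j r_\mu(t)|$.

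For $j=0$, Lemma~\ref{d:lem:ml} gives
\[
 \mu^{-1}r_\mu(t)\asymp_\alpha \frac{t^{\alpha-1}}{(1+\mu t^\alpha)^2},
\]
and the right-hand side is decreasing in $\mu$. The supremum over $\mu\ge\lambda$ is therefore attained at $\mu=\lambda$, which gives the claim.

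For $j=1$ I need a bound of the form $|r_\mu'(t)|\le C_\alpha t^{-1}r_\mu(t)$. Complete monotonicity of $r_\mu$ alone does not provide this, so I would use scaling instead. Write $r_\mu(t)=\mu t^{\alpha-1}E_{\alpha,\alpha}(-\mu t^\alpha)$ and put $z=\mu t^\alpha$. Then
\[
 t\,r_\mu'(t)=\mu t^{\alpha-1}\bigl[(\alpha-1)E_{\alpha,\alpha}(-z)-\alpha zE_{\alpha,\alpha}'(-z)\bigr].
\]
The remaining task is the uniform bound
\[
 z|E_{\alpha,\alpha}'(-z)|\le C_\alpha(1+z)^{-2},
\]
and this is the main technical point. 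Near $z=0$ it follows from the power series \eqref{eq:powerseriesofml}. For large $z$, the asymptotic expansion from \cite{gorenflo2002} used in Lemma~\ref{d:lem:ml} gives $E_{\alpha,\alpha}(-z)\sim c\,z^{-2}$, and termwise differentiation of that expansion, which is legitimate for Mittag--Leffler functions, gives $E_{\alpha,\alpha}'(-z)=O(z^{-3})$. Continuity covers the intermediate range of $z$. With this bound, $|t\,r_\mu'(t)|\le C_\alpha\,\mu t^{\alpha-1}(1+z)^{-2}$, and dividing by $\mu$ and taking the supremum as in the case $j=0$ gives the estimate for $j=1$.

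If termwise differentiation of the asymptotic expansion seems delicate, I would use a different route for the large-$z$ derivative bound. The function $E_{\alpha,\alpha}(-x)$ is completely monotone in $x$, with a Bernstein measure that has an explicit density. From that representation one can bound $x|E_{\alpha,\alpha}'(-x)|$ directly.

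Combining the two cases gives \eqref{eq:subdiffusion-regularity} with $C_u=C_\alpha\|Au_0-f_\infty\|$.
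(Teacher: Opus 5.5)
Your argument is correct, and its skeleton (the stationary shift, $u'=-K_A(t)(Au_0-f_\infty)$, $u''=-K_A'(t)(Au_0-f_\infty)$, and reduction via the spectral theorem to scalar bounds for $\mu\ge\lambda$) is the same as the paper's. The difference is in how you bound $r_\mu'$.

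The paper never works with $E_{\alpha,\alpha}'$. Since $r_\mu$ is completely monotone, $-r_\mu'$ is nonnegative and decreasing. Integrating over $[t/2,t]$ then gives $-r_\mu'(t)\le\frac2t\bigl(r_\mu(t/2)-r_\mu(t)\bigr)\le\frac2t r_\mu(t/2)$. The envelope $\mu t^{\alpha-1}/(1+\mu t^\alpha)^2$ from Lemma~\ref{d:lem:ml} changes only by an $\alpha$-dependent factor when $t$ is halved. Your remark that complete monotonicity does not yield $|r_\mu'|\lesssim t^{-1}r_\mu$ is therefore literally true but beside the point. Complete monotonicity together with the doubling property of that envelope, already available from Lemma~\ref{d:lem:ml}, closes the estimate in two lines.

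Your route instead computes $t\,r_\mu'(t)$ in the variable $z=\mu t^\alpha$ and needs the additional input $z|E_{\alpha,\alpha}'(-z)|\le C_\alpha(1+z)^{-2}$. This is valid: the large-$z$ expansion of $E_{\alpha,\alpha}$ holds uniformly in a closed sector whose interior contains the negative real axis, so Cauchy estimates justify differentiating it termwise. It does, however, require citing that sectorial result, whereas the paper uses only facts it has already established.

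Your fallback through the complete monotonicity of $x\mapsto E_{\alpha,\alpha}(-x)$ is in fact the paper's mechanism moved to the variable $x$. It gives $E_{\alpha,\alpha}'(-x)\le\frac2x E_{\alpha,\alpha}(-x/2)\le C_\alpha x^{-1}(1+x)^{-2}$.

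Your computation yields an explicit formula for $t\,r_\mu'$, from which finer large-$z$ behaviour of $r_\mu'$ can be read off. The paper's argument is shorter and more robust: it applies verbatim to any completely monotone kernel with a doubling envelope, and it iterates to higher derivatives.
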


\begin{proof}
Set $u_\infty=A^{-1}f_\infty$ and $v_0=u_0-u_\infty\in D(A)$.
The stationary shift gives
$u(t)=u_\infty+E_\alpha(-t^\alpha A)v_0$.
For a spectral value $a\ge\lambda$, Lemma~\ref{d:lem:ml} gives
$r_a(t)\le C_\alpha a t^{\alpha-1}/(1+a t^\alpha)^2$.
Complete monotonicity also gives
\[
 -r_a'(t)\le\frac2t\bigl(r_a(t/2)-r_a(t)\bigr)
 \le\frac2t r_a(t/2)
 \le C_\alpha\frac{a t^{\alpha-2}}{(1+a t^\alpha)^2}.
\]
Indeed, $-r_a'$ is decreasing, so integrate it over $[t/2,t]$.
Consequently, for $k=1,2$,
\[
 |\partial_t^k E_\alpha(-a t^\alpha)|
 \le C_\alpha\frac{a t^{\alpha-k}}{(1+a t^\alpha)^2}
 \le C_\alpha\frac{a t^{\alpha-k}}{(1+\lambda t^\alpha)^2}.
\]
The spectral theorem and $Av_0=Au_0-f_\infty$ prove the claim.
\end{proof}

For time-dependent forcing, write $f=f_\infty+g$. It suffices
that $g$ is twice continuously differentiable in time with values
in $L^2(\Omega)$ and satisfies
\[
 \|g^{(j)}(t)\|\le G(1+t)^{-\alpha-j},
 \qquad j=0,1,2,\quad t\ge0.
\]
Together with $u_0\in D(A)$, this implies
\eqref{eq:subdiffusion-regularity} with
$C_u\le C_{\alpha,\lambda}(\|Au_0-f_\infty\|+G)$,
independently of the final time; no condition $Au_0=f(0)$ is
required. Indeed, the additional solution is $z=K_A*g$, where
$K_A(t)=A^{-1}r_A(t)$ and
$\|K_A^{(j)}(t)\|\le C_\alpha t^{\alpha-1-j}/(1+\lambda t^\alpha)^2$
for $j=0,1,2$. The identities
\[
 z'=K_A(t)g(0)+K_A*g',\qquad
 z''=K_A'(t)g(0)+K_A(t)g'(0)+K_A*g''
\]
give $\|z^{(k)}(t)\|\le C_\alpha Gt^{\alpha-k}$ for $0<t\le1$.
For $t\ge1$, split the convolutions at $t/2$ and use
$\int_0^\infty\|K_A(s)\|\,ds\le C_\alpha/\lambda$.
This directly bounds $z'$ by $C_{\alpha,\lambda}Gt^{-\alpha-1}$.
For $z''$, the cancellation
\[
 K_A(t)g'(0)+\int_0^{t/2} K_A(t-s)g''(s)\,ds
 =K_A(t)g'(h)
   +\int_0^{t/2}[K_A(t-s)-K_A(t)]g''(s)\,ds
\]
and $\int_0^\infty s\|g''(s)\|\,ds\le C_\alpha G$
bound the early part by $C_{\alpha,\lambda}Gt^{-\alpha-2}$;
the recent part has the same bound since
$\sup_{s\ge t/2}\|g''(s)\|\le C_\alpha Gt^{-\alpha-2}$.
Combining these estimates with the stationary-shift argument
proves the asserted all-time regularity.

We remark that the condition $u_0\in D(A)$ is a sufficient spatial regularity
assumption; $u_0\in L^2$ alone generally gives weaker small-time scale $t^{-k}$.

\begin{theorem}\label{thm:subdiffusion-decay}
Assume \eqref{eq:subdiffusion-regularity}, $\lambda>0$,
$\tau/\min_j\tau_j\le K$, and $\lambda\tau^\alpha\le1$.
Continue the L1 scheme to arbitrary positive times. Its consistency
error and temporal error satisfy, for every $t_n>0$,
\begin{equation}\label{eq:subdiffusion-consistency-split}
 \|\eta_n\|\le C\left[
       (\tau/t_n)^{1+\alpha}
       +\frac{(\tau/t_n)^{2-\alpha}}{(1+\lambda t_n^\alpha)^2}\right],
\end{equation}
\begin{equation}\label{eq:subdiffusion-decay}
\begin{aligned}
 \|e_n\|\le{}&\|e_0\|E_\alpha(-\rho^{-1}\lambda t_n^\alpha)\\
 &+C\left[
 \frac{\tau t_n^{\alpha-1}}{(1+\lambda t_n^\alpha)^2}
 +\frac{\tau^{2-\alpha}t_n^{2\alpha-2}}
              {(1+\lambda t_n^\alpha)^3}\right].
\end{aligned}
\end{equation}
Here $\rho$ is as in
\eqref{eq:discrete-gronwall} and $C$ is the product of the constant in \eqref{eq:discrete-gronwall} with $C_{u}$; in particular, the estimates are uniform in the final time.
\end{theorem}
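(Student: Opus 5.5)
The plan is to follow the finite-time argument of Theorem~\ref{thm:subdiffusion-decay-finite}. Two changes are needed. First, the consistency bound \eqref{eq:subdiffusion-consistency-split} must be rederived from the decaying regularity \eqref{eq:subdiffusion-regularity}. Second, the extra decay factor has to be kept when the bound is convolved with the discrete resolvent.

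\textbf{Step 1: consistency.} As in Lemma~\ref{lem:subdiffusion-consistency-finite}, set $w=I_\tau u-u$ and write
\[
\eta_n=-\frac{\alpha}{\Gamma(1-\alpha)}\int_0^{t_n}(t_n-s)^{-1-\alpha}w(s)\,ds .
\]
For $t_n<4\tau$, the crude bound $\|\eta_n\|\le C C_u$ is enough, since then $(\tau/t_n)^{1+\alpha}\ge c$. For $t_n\ge4\tau$, split the integral at $t_n/2$.

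On the early part, use only $\|\partial_t^ku\|\le C_ut^{\alpha-k}$, obtained by dropping the denominator. This reproduces the term $C C_u(\tau/t_n)^{1+\alpha}$ exactly as before. On the recent part, $s\in[t_n/4,t_n]$, so the interpolation bound with $\partial_t^2u$ gives
\[
\|w(s)\|\le C C_u\frac{t_n^{\alpha-2}}{(1+\lambda t_n^\alpha)^2}\min\{\tau^2,\tau(t_n-s)\}.
\]
Here we use that $(1+\lambda s^\alpha)^{-2}$ is comparable to its value at $t_n$. Integrating against $(t_n-s)^{-1-\alpha}$ then gives the second term of \eqref{eq:subdiffusion-consistency-split}.

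\textbf{Step 2: error equation.} The error equation and the scalar inequality $\cD_\tau^\alpha\|e_n\|+\lambda\|e_n\|\le\|\eta_n\|$ are unchanged. Therefore \eqref{eq:discrete-gronwall} yields \eqref{eq:subdiffusion-error-convolution} for all $n$. This holds uniformly in the final time, because Theorem~\ref{thm:mainest} allows $N=\infty$.

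\textbf{Step 3: the convolution.} Dominate $\|\eta_m\|$ on $[t_{m-1},t_m]$ by $C C_u(h_1(s)+h_2(s))$, where
\[
h_1(s)=\Bigl(\frac{\tau}{s+\tau}\Bigr)^{1+\alpha},\qquad
h_2(s)=\Bigl(\frac{\tau}{s+\tau}\Bigr)^{2-\alpha}\bigl(1+\lambda(s+\tau)^\alpha\bigr)^{-2}.
\]
This domination uses the mesh ratio $K$ and the comparability of $1+\lambda s^\alpha$ across one cell. Then split $\int_0^t(h_1+h_2)(s)k_\lambda(t-s)\,ds$ at $t/2$.

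On the first half, $k_\lambda(t-s)\le C_\alpha k_\lambda(t)$, and $\int_0^\infty(h_1+h_2)\le C_\alpha\tau$. This gives $\tau k_\lambda(t)\asymp\tau t^{\alpha-1}/(1+\lambda t^\alpha)^2$, the first-order term.

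On the second half, $h_1(s)\le C(\tau/t)^{1+\alpha}$ and $h_2(s)\le C(\tau/t)^{2-\alpha}(1+\lambda t^\alpha)^{-2}$. The $k_\lambda$ integral is bounded by $H_\lambda(t)\le Ct^\alpha/(1+\lambda t^\alpha)$. The $h_1$ piece is absorbed into $\tau k_\lambda(t)$ exactly as in the finite-time proof, using $\lambda\tau^\alpha\le1$ and $t\ge\tau/K$. The $h_2$ piece gives
\[
\tau^{2-\alpha}t^{\alpha-2}\cdot t^\alpha(1+\lambda t^\alpha)^{-3}=\frac{\tau^{2-\alpha}t^{2\alpha-2}}{(1+\lambda t^\alpha)^{3}},
\]
which is the second term of \eqref{eq:subdiffusion-decay}.

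\textbf{Main obstacle.} The delicate point is Step~1: the decaying factor must be carried on the recent part only. On the early part the undecayed $(\tau/t_n)^{1+\alpha}$ cannot be improved, and it is harmless only because Step~3 absorbs it into the first-order term. I would check carefully that the cells near $t_n/2$ and the first cell stay within the comparability constants depending on $K$. I would also check that all constants are independent of $T$; this holds since they depend only on $\alpha$, $K$ and the constant $C_u$ in \eqref{eq:subdiffusion-regularity}.
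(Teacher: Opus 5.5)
Your proposal is correct and follows essentially the same route as the paper. The consistency bound is rederived by splitting at $t_n/2$, with the decay factor carried only on the recent half. The error bound comes from convolving a majorant of $\|\eta_m\|$ against $k_\lambda$, again split at $t_n/2$: the $(\tau/t_n)^{1+\alpha}$ recent piece is absorbed into $\tau k_\lambda(t_n)$, and $H_\lambda$ supplies the extra factor $(1+\lambda t_n^\alpha)^{-1}$. The only cosmetic difference is that your majorant uses $(1+\lambda(s+\tau)^\alpha)^{-2}$ where the paper uses $(1+\lambda s^\alpha)^{-2}$. Both dominate the nodal bound up to constants depending on $K$.
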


\begin{proof}
Repeat the proof of Lemma~\ref{lem:subdiffusion-consistency-finite}.
The estimate on $[0,t_n/2]$ is unchanged. On $[t_n/2,t_n]$,
\eqref{eq:subdiffusion-regularity} inserts the additional factor
$(1+\lambda t_n^\alpha)^{-2}$ in the interpolation bound.
This proves \eqref{eq:subdiffusion-consistency-split}; the first
few levels are again controlled by its first term alone.
For the global error, use \eqref{eq:subdiffusion-error-convolution}
with the majorant
\[
 \tilde{h}(s)=(\tau/(s+\tau))^{1+\alpha}
      +\frac{(\tau/(s+\tau))^{2-\alpha}}
                   {(1+\lambda s^\alpha)^2}.
\]
As before, $\int_0^\infty \tilde{h} \le C_\alpha\tau$. Splitting at $t_n/2$
therefore bounds the consistency contribution by
\[
 C\left[\tau k_\lambda(t_n)
 +\left(\tau^{1+\alpha}t_n^{-1-\alpha}
 +\frac{\tau^{2-\alpha}t_n^{\alpha-2}}
             {(1+\lambda t_n^\alpha)^2}\right)H_\lambda(t_n)\right].
\]
The absorption in the finite-time proof applies unchanged to the
first recent term. The second gains one more denominator from
$H_\lambda$, proving \eqref{eq:subdiffusion-decay}.
\end{proof}

\subsection{Discussion}

The bounds in Sections~\ref{subsec:subdiffusion-finite}
and~\ref{subsec:subdiffusion-infinite} both separate the accumulated
initial singularity from recent interpolation errors.

For exact initial values, $e_0=0$. Their first-order scale is 
\[
B_1=\tau t_n^{\alpha-1}/(1+\lambda t_n^\alpha)^2.
\]
The respective higher-order scales and ratios are
\begin{equation}\label{eq:subdiffusion-regime-ratio}
\begin{aligned}
 B_{2,\mathrm{fin}}&=\frac{\tau^{2-\alpha}t_n^{2\alpha-2}}
                            {1+\lambda t_n^\alpha},
 &\frac{B_{2,\mathrm{fin}}}{B_1}
     &=(\tau/t_n)^{1-\alpha}(1+\lambda t_n^\alpha),\\
 B_{2,\infty}&=\frac{\tau^{2-\alpha}t_n^{2\alpha-2}}
                            {(1+\lambda t_n^\alpha)^3},
 &\frac{B_{2,\infty}}{B_1}
     &=\frac{(\tau/t_n)^{1-\alpha}}{1+\lambda t_n^\alpha}.
\end{aligned}
\end{equation}
Clearly, when $t_n$ is small or if the time-varying forcing term $f$ dominates the decay feature,
 one may observe the $O(\tau^{2-\alpha})$ convergence.  
However, refinement at
fixed $T$ and $\lambda$ favors the first-order scale. This is consistent with the parameter-dependent trends in
\cite{zhangzhangzhao-decay}, although an upper-bound comparison
does not establish an exact observed order.

 Under
\eqref{eq:subdiffusion-regularity}, recent errors instead acquire
the extra decay factor shown in $B_{2,\infty}$.
For $\lambda t_n^\alpha\ge1$, \eqref{eq:subdiffusion-decay} gives
\begin{equation}\label{eq:subdiffusion-longtime}
 \|e_n\|\le\|e_0\|E_\alpha(-\rho^{-1}\lambda t_n^\alpha)
 +C\left[
 \lambda^{-2}\tau t_n^{-1-\alpha}
 +\lambda^{-3}\tau^{2-\alpha}t_n^{-2-\alpha}\right].
\end{equation}
This indicates that if $t_n\gg 1$, the recent interpolation
errors are small due to the decay of the solution, so the accumulated initial error dominates, 
and one will observe the first order convergence.

We remark that constants $M_T$ and $C$ might also be controlled for more careful comparison about the parameters.
 The results provide the two-term L1 structure motivated by Conjecture~4.1
of \cite{zhangzhangzhao-decay}; they do not resolve the non-dissipative case $\kappa\ge\lambda_1$.

\subsection*{Acknowledgement}

The work of L.L. is partially supported by NSFC 12371400. ChatGPT Astra and 6.1 have helped in constructing the supersolution based on our old version of draft, and in writing first version of manuscript.

\appendix

\section{Proof of Lemma \ref{lem:kernelcont}}\label{app:phi}

\begin{proof}

Write
$\phi=\phi_{\lambda,\kappa}$ and $Q=Q_{\lambda,\kappa}$.
The solution $\phi$ is continuous on $[0,\infty)$ and smooth on $(0,\infty)$.
Integrating the equation from $0$ to $t$ gives
\begin{equation}\label{proof:phi:integralidentity}
    \kappa\phi(t)+I^{1-\alpha}\phi(t)
    +\lambda\int_0^t\phi(s)\,\dd s=\lambda.
\end{equation}
Take the principal branch of $z^\alpha$. Taking the Laplace transform and multiplying $z$ gives
$(z^\alpha+\kappa z+\lambda)\widehat\phi(z)=\lambda$, which proves the first assertion.

Using the formula,  $\widehat\phi$ is analytic in
$\mathbb C\setminus(-\infty,0]$.
A standard keyhole contour argument around the negative
real axis gives
\begin{equation}\label{eq:phiLapContour}
    \widehat\phi(z)
    =\int_0^\infty
      \frac{\rho_{\lambda,\kappa}(r)}{z+r}\,\dd r,
    \qquad z>0,
\end{equation}
where
\begin{equation}\label{eq:density}
    \rho_{\lambda,\kappa}(r)
    =\frac{\lambda\sin(\pi\alpha)}{\pi}
      \frac{r^\alpha}
      {(\lambda-\kappa r+r^\alpha\cos(\pi\alpha))^2
       +r^{2\alpha}\sin^2(\pi\alpha)}.
\end{equation}
The density is positive and integrable.
By uniqueness of the Laplace transform,
\begin{equation}\label{eq:phiLapconc2}
    \phi(t)
    =\int_0^\infty
      e^{-rt}\rho_{\lambda,\kappa}(r)\,\dd r.
\end{equation}
Differentiation under the integral sign yields
\begin{equation}\label{eq:phicomppos0}
    (-1)^p\phi^{(p)}(t)
    =\int_0^\infty
      e^{-rt}r^p\rho_{\lambda,\kappa}(r)\,\dd r>0,
    \qquad t>0,\quad p=0,1,2.
\end{equation}
Thus, $\phi$ is positive, decreasing, and convex.
Moreover, monotone convergence verifies that
\begin{equation}
    \phi(0+)=\lim_{z\to\infty}z\widehat\phi(z)
      =\frac{\lambda}{\kappa}, \qquad
    \int_0^\infty\phi(t)\,\dd t=\lim_{z\downarrow0}\widehat\phi(z)=1.
\end{equation}

We next establish \eqref{eq:phiasymp}.
For $t>0$ satisfying $\kappa t^{\alpha-1}\le1$, set
$\sigma:=\kappa t^{\alpha-1}$ and $\theta:=\lambda t^\alpha$.
Changing variables $y=rt$ in \eqref{eq:phicomppos0} gives
\begin{equation}
    (-1)^p\phi^{(p)}(t)
    =\frac{\sin(\pi\alpha)}{\pi}
      \lambda t^{\alpha-1-p}I_p(\sigma,\theta),
    \qquad p=0,1,2,
\end{equation}
where
\begin{equation}\label{eq:Ip}
    I_p(\sigma,\theta)
    :=\int_0^\infty
    \frac{e^{-y}y^{\alpha+p}}
    {(\theta-\sigma y+\cos(\pi\alpha)y^\alpha)^2
     +\sin^2(\pi\alpha)y^{2\alpha}}\,\dd y.
\end{equation}
Denote the denominator by $D(y)$.
Restricting the integral to $[1,2]$, where
$D(y)\le C(1+\theta)^2$, yields
\begin{equation}
    I_p(\sigma,\theta)\ge c_\alpha(1+\theta)^{-2}.
\end{equation}
For the upper bound, observe that
\begin{equation}
    D(y)\ge c_\alpha
    \bigl((\theta-\sigma y)^2+y^{2\alpha}\bigr).
\end{equation}
For $\theta\le1$, this directly gives $I_p\le C_\alpha$.
For $\theta\ge1$, split the integral at $y=\theta/2$.
Using $\theta-\sigma y\ge\theta/2$ on the first interval
and an exponential tail estimate on the second, we obtain
\begin{equation}
    I_p(\sigma,\theta)
    \le C_\alpha\left(
        \theta^{-2}\int_0^{\theta/2}
        e^{-y}y^{\alpha+p}\,\dd y
        +\int_{\theta/2}^\infty
        e^{-y}y^{p-\alpha}\,\dd y
    \right)
    \le C_\alpha\theta^{-2}.
\end{equation}
Consequently,
\begin{equation}
    I_p(\sigma,\theta)\asymp_\alpha(1+\theta)^{-2},
    \qquad p=0,1,2.
\end{equation}
Substituting this estimate into the representation of
$(-1)^p\phi^{(p)}(t)$ proves \eqref{eq:phiasymp}.

For the cumulative estimate, the spectral representation
and the elementary bound
$1-e^{-x}\asymp x/(1+x)$ for $x>0$ yield
\begin{equation}\label{eq:Q-laplace-comparison}
    \begin{aligned}
        Q(t)
        &=\int_0^\infty
          \frac{1-e^{-rt}}{r}
          \rho_{\lambda,\kappa}(r)\,\dd r\\
        &\asymp
          \int_0^\infty
          \frac{\rho_{\lambda,\kappa}(r)}{r+t^{-1}}\,\dd r
          =\widehat\phi(1/t)
          =\frac{\theta}{1+\sigma+\theta}
          \asymp\frac{\theta}{1+\theta}.
    \end{aligned}
\end{equation}
The last equivalence follows from $\sigma\le1$.
This proves \eqref{eq:Qasym}.
\end{proof}

\bibliographystyle{plain}
\bibliography{references}

@article{ramirez2012generalized,
  title={Generalized monotone iterative technique for {Caputo} fractional differential equation with periodic boundary condition via initial value problem},
  author={Ram{\'\i}rez, J Diego and Vatsala, Aghalaya S},
  journal={International Journal of Differential Equations},
  volume={2012},
  pages={1--17},
  year={2012},
  publisher={Hindawi Limited}
}

@article{fllx18note,
  title={A note on one-dimensional time fractional {ODEs}},
  author={Feng, Y. and Li, L. and Liu, J.-G. and Xu, X.},
  journal={Appl. Math. Lett. },
  volume={83},
  pages={87--94},
  year={2018}
}

@article{liliu2018compact,
  title={Some compactness criteria for weak solutions of time fractional {PDE}s},
  author={Li, L. and Liu, J.-G.},
  journal={SIAM J. Math. Anal.},
  volume={50},
  number={4},
  pages={3963--3995},
  year={2018},
  publisher={SIAM}
}

@article{liliu18frac,
  title={A generalized definition of {Caputo} derivatives and its application to fractional {ODEs}},
  author={Li, L. and Liu, J.-G.},
  journal={SIAM: J. Math. Anal.},
  volume={50},
  number={3},
  year={2018}
}

@article{liao2019discrete,
  title={A discrete {Gr\"onwall} inequality with applications to numerical schemes for subdiffusion problems},
  author={Liao, Hong-lin and McLean, William and Zhang, Jiwei},
  journal={SIAM Journal on Numerical Analysis},
  volume={57},
  number={1},
  pages={218--237},
  year={2019},
  publisher={SIAM}
}

@article{zhangzhangzhao-decay,
  author = {Zhang, Jiwei and Zhang, Zhimin and Zhao, Chengchao},
  title = {Do we need decay-preserving error estimate for solving
           parabolic equations with initial singularity?},
  journal = {Communications in Computational Physics},
  year    = {2026},
  volume  = {39},
  number  = {4},
  pages   = {1027--1058}
}

@article{paper,
  author  = {Feng, Yuanyuan and Li, Lei and Liu, Jian-Guo and Tang, Tao},
  title   = {Some {Gr\"onwall} Inequalities for a Class of
             Discretizations of Time Fractional Equations
             on Nonuniform Meshes},
  journal = {SIAM Journal on Numerical Analysis},
  year    = {2024},
  volume  = {62},
  number  = {5},
  pages   = {2196--2221},
  doi     = {10.1137/24M1631614}
}

@article{paper2,
  author  = {Li, Lei and Wang, Dongling},
  title   = {Numerical stability of {Gr\"unwald--Letnikov} method
             for time fractional delay differential equations},
  journal = {BIT Numerical Mathematics},
  year    = {2022},
  volume  = {62},
  number  = {3},
  pages   = {995--1027},
  doi     = {10.1007/s10543-021-00900-0}
}

@article{fengli2023a,
  title={On the completely positive kernels for nonuniform meshes},
  author={Feng, Yuanyuan and Li, Lei},
  journal={Quarterly of Applied Mathematics},
  year   = {2024},
  volume = {82},
  number = {4},
  pages  = {801--817},
  doi    = {10.1090/qam/1684}
}

@article{liao2020positive,
  author  = {Liao, Hong-lin and Tang, Tao and Zhou, Tao},
  title   = {Positive Definiteness of Real Quadratic Forms
             Resulting from the Variable-Step {L1}-Type
             Approximations of Convolution Operators},
  journal = {Science China Mathematics},
  year    = {2024},
  volume  = {67},
  number  = {2},
  pages   = {237--252},
  doi     = {10.1007/s11425-022-2229-5}
}

@article{feng2018continuous,
  title={Continuous and discrete one dimensional autonomous fractional {ODE}s.},
  author={Feng, Y.  and  Li, L. and Liu, J.-G. and Xu, X.},
  journal={Discrete Contin. Dyn. Syst. Ser. B},
  volume={23},
  number={8},
  year={2018}
}

@article{clement1981asymptotic,
  title={Asymptotic behavior of solutions of nonlinear {Volterra} equations with completely positive kernels},
  author={Cl{\'e}ment, Ph. and Nohel, John A.},
  journal={SIAM Journal on Mathematical Analysis},
  volume={12},
  number={4},
  pages={514--535},
  year={1981},
  publisher={SIAM}
}

@article{miller1968volterra,
  title={On {Volterra} integral equations with nonnegative integrable resolvents},
  author={Miller, R. K.},
  journal={Journal of Mathematical Analysis and Applications},
  volume={22},
  number={2},
  pages={319--340},
  year={1968},
  publisher={Academic Press}
}

@book{widder41,
	Author = {Widder, David Vernon},
	Publisher = {Princeton University Press},
	Title = {The Laplace Transform},
	Year = {1941}}

@book{ssv12,
  title={Bernstein functions: theory and applications},
  author={Schilling, R. L. and Song, R. and Vondracek, Z.},
  volume={37},
  year={2012},
  publisher={Walter de Gruyter}
}

@article{gorenflo2002,
  author  = {Gorenflo, Rudolf and Loutchko, Joulia and Luchko, Yuri},
  title   = {Computation of the {Mittag--Leffler} Function
             {$E_{\alpha,\beta}(z)$} and Its Derivative},
  journal = {Fractional Calculus and Applied Analysis},
  year    = {2002},
  volume  = {5},
  number  = {4},
  pages   = {491--518}
}

@article{liwang2021cm,
  author = {Li, Lei and Wang, Dongling},
  title = {Complete monotonicity-preserving numerical methods for time fractional {ODEs}},
  journal = {Communications in Mathematical Sciences},
  year = {2021},
  volume = {19},
  number = {5},
  pages = {1301--1336},
  doi = {10.4310/CMS.2021.v19.n5.a6}
}

@article{liliu2019caputo,
  author = {Li, Lei and Liu, Jian-Guo},
  title = {A Discretization of {Caputo} Derivatives with Application to Time Fractional {SDEs} and Gradient Flows},
  journal = {SIAM Journal on Numerical Analysis},
  year = {2019},
  volume = {57},
  number = {5},
  pages = {2095--2120},
  doi = {10.1137/19M123854X}
}

@article{liao2018sharp,
  author  = {Liao, Hong-lin and Li, Dongfang and Zhang, Jiwei},
  title   = {Sharp Error Estimate of the Nonuniform {L1} Formula
             for Linear Reaction-Subdiffusion Equations},
  journal = {SIAM Journal on Numerical Analysis},
  year    = {2018},
  volume  = {56},
  number  = {2},
  pages   = {1112--1133},
  doi     = {10.1137/17M1131829}
}

@article{wangzou2023stability,
  author  = {Wang, Dongling and Zou, Jun},
  title   = {{Mittag--Leffler} Stability of Numerical Solutions
             to Time Fractional {ODEs}},
  journal = {Numerical Algorithms},
  year    = {2023},
  volume  = {92},
  number  = {4},
  pages   = {2125--2159},
  doi     = {10.1007/s11075-022-01379-8}
}

@article{stynes2017error,
  author  = {Stynes, Martin and O'Riordan, Eugene
             and Gracia, Jos{\'e} Luis},
  title   = {Error Analysis of a Finite Difference Method
             on Graded Meshes for a Time-Fractional Diffusion Equation},
  journal = {SIAM Journal on Numerical Analysis},
  year    = {2017},
  volume  = {55},
  number  = {2},
  pages   = {1057--1079},
  doi     = {10.1137/16M1082329}
}

@article{fengli2024monotonicity,
  author  = {Feng, Yuanyuan and Li, Lei},
  title   = {A Class of Monotonicity-Preserving Variable-Step
             Discretizations for {Volterra} Integral Equations},
  journal = {BIT Numerical Mathematics},
  year    = {2024},
  volume  = {64},
  note    = {Article 24},
  doi     = {10.1007/s10543-024-01027-8}
}

@book{diethelm2010analysis,
  author    = {Diethelm, Kai},
  title     = {The Analysis of Fractional Differential Equations:
               An Application-Oriented Exposition Using Differential
               Operators of {Caputo} Type},
  series    = {Lecture Notes in Mathematics},
  volume    = {2004},
  publisher = {Springer},
  address   = {Berlin, Heidelberg},
  year      = {2010},
  doi       = {10.1007/978-3-642-14574-2}
}

@book{jin2021fractional,
  author    = {Jin, Bangti},
  title     = {Fractional Differential Equations:
               An Approach via Fractional Derivatives},
  series    = {Applied Mathematical Sciences},
  volume    = {206},
  publisher = {Springer},
  address   = {Cham},
  year      = {2021},
  doi       = {10.1007/978-3-030-76043-4}
}

@article{vergara2015decay,
  author  = {Vergara, Vicente and Zacher, Rico},
  title   = {Optimal Decay Estimates for Time-Fractional and Other
             Nonlocal Subdiffusion Equations via Energy Methods},
  journal = {SIAM Journal on Mathematical Analysis},
  year    = {2015},
  volume  = {47},
  number  = {1},
  pages   = {210--239},
  doi     = {10.1137/130941900}
}

@article{kopteva2019error,
  author  = {Kopteva, Natalia},
  title   = {Error Analysis of the {L1} Method on Graded and Uniform
             Meshes for a Fractional-Derivative Problem in Two
             and Three Dimensions},
  journal = {Mathematics of Computation},
  year    = {2019},
  volume  = {88},
  number  = {319},
  pages   = {2135--2155},
  doi     = {10.1090/mcom/3410}
}

@article{lubich1986discretized,
  author  = {Lubich, Christian},
  title   = {Discretized Fractional Calculus},
  journal = {SIAM Journal on Mathematical Analysis},
  year    = {1986},
  volume  = {17},
  number  = {3},
  pages   = {704--719},
  doi     = {10.1137/0517050}
}

@article{wangxiaozou2020long,
  author  = {Wang, Dongling and Xiao, Aiguo and Zou, Jun},
  title   = {Long-Time Behavior of Numerical Solutions
             to Nonlinear Fractional {ODEs}},
  journal = {ESAIM: Mathematical Modelling and Numerical Analysis},
  year    = {2020},
  volume  = {54},
  number  = {1},
  pages   = {335--358},
  doi     = {10.1051/m2an/2019055}
}

\end{document}